\newcommand{\C}{\mathbb{C}}
\newcommand{\R}{\mathbb{R}}
\newcommand{\del}{\partial}
\newcommand{\ddc}{\text{dd}^c}
\newcommand{\diff}{\text{\normalfont d}}
\newcommand{\vol}{\text{\normalfont vol}}
\newcommand{\D}{\mathbb{D}}
\newcommand{\pr}{\mathbb{P}}
\newtheorem{theorem}{Theorem}[section]
\newtheorem{proposition}[theorem]{Proposition}
\newtheorem{corollary}[theorem]{Corollary}
\newtheorem{lemma}[theorem]{Lemma}
\newtheorem*{theorem*}{Theorem}
\theoremstyle{definition}
\newtheorem{definition}[theorem]{Definition}
\newtheorem{example}[theorem]{Example}
\newtheorem{remark}[theorem]{Remark}
\numberwithin{equation}{section}
\title{Self-intersection of Foliated Cycles on Complex Manifolds}
\author{Lucas Kaufmann}
\address{Sorbonne Universités, UPMC Univ Paris 06, IMJ-PRG, UMR 7586 CNRS, Univ Paris Diderot, Sorbonne Paris Cité, F-75005, Paris, France}
\email{lucas.kaufmann@imj-prg.fr}
\date{}
\begin{document}
\maketitle

\begin{abstract}
Let $X$ be a compact Kähler manifold and let $T$ be a foliated cycle directed by a transversally Lipschitz lamination on $X$. We prove that the self-intersection of the cohomology class of $T$ vanishes as long as $T$ does not contain currents of integration along compact manifolds. 

As a consequence we prove that  Lipschitz laminations of low codimension in certain manifolds, e.g.\ projective spaces, do not carry any foliated cycles except those given by integration along compact leaves. 
\end{abstract}

\section{Introduction}

Let $X$ be a compact complex manifold. An embedded lamination $\mathcal L$ of dimension $q$ in $X$ is a compact subset of $X$ given locally by a disjoint union of holomorphic graphs over a $q$-dimensional polydisc. These local graphs are called plaques of $\mathcal L$ (see section \ref{sec:laminations} for a precise definition). When $q=1$ we say that $\mathcal L$ is a \textit{Riemann surface lamination} on $X$. These  objects have been considered by numerous authors and they have connections to many other branches of Geometry and Dynamics such as complex dynamical systems of several variables and foliation theory.  For an account of the subject the reader may consult the surveys \cite{fornaess-sibony:laminations}, \cite{ghys:laminations-survey}  and the reference therein.

Given an embedded lamination $\mathcal L$ of dimension $q$, a foliated cycle $T$ directed by $\mathcal L$ is a positive closed current on $X$ that is locally given by an average of currents of integration along the plaques with respect to a positive measure $\mu$. We say that $T$ is diffuse if the measure $\mu$ has no atoms. This is equivalent to the fact that $T$ has no mass on submanifolds of dimension $q$ (see Remark \ref{rmk:atoms-leaves}). Alternatively, a foliated cycle may be described as a holonomy invariant transverse measure.

From a geometric point of view it is expected that the self-intersection of a diffuse foliated cycle should be zero. For the homological intersection this was proved by Hurder--Mitsumatsu,  \cite{hurder-mitsumatsu} in the context of (real) foliations of arbitrary dimension and codimension. For complex laminations, the main results concern one-dimensional cycles on complex surfaces. 

The vanishing of the self-intersection indicates that the existence of a foliated cycle is quite special. As a matter of fact, many laminations  carry no foliated cycle at all. For laminations in $\pr^2$ this was first noted by Camacho--Lins-Neto--Sad in \cite{camacho-lins-neto-sad}, where they considered cycles supported by minimal sets of holomorphic foliations. For transversally smooth foliations the result follows from Hurder--Mitsumatsu's Theorem and for Lipschitz laminations this is due to Forn\ae ss and Sibony (see  \cite{fornaess-sibony:finite-energy} and \cite{fornaess-sibony:laminations}).

Our main result concerns the self--intersection of diffuse foliated cycles of arbitrary dimension and codimension on a compact Kähler manifold.

\begin{theorem} \label{thm:main-thm}
Let $X$ be a compact Kähler manifold of dimension $n$ and let $T$ be a diffuse foliated cycle of dimension $q$ directed by an embedded transversally Lipschitz lamination on $X$. If $\{T\} \in H^{n-q,n-q}(X)$ denotes its cohomology class then $\{T\} \smallsmile \{T\} = 0$ in $H^{2n-2q,2n-2q}(X)$.
\end{theorem}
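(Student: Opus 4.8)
The plan is to compute the self-intersection class by pairing it against test forms, and to represent one factor of $T$ by a geometric regularization adapted to the lamination, so that the diffuseness of the transverse measure forces the answer to be zero. Since $X$ is compact K\"ahler, Poincar\'e duality shows that $\{T\} \smallsmile \{T\} = 0$ in $H^{2n-2q,2n-2q}(X)$ if and only if $\int_X \{T\} \smallsmile \{T\} \smallsmile \{\eta\} = 0$ for every smooth closed form $\eta$ of bidegree $(q,q)$. Fixing such an $\eta$, I would produce a family $T_\epsilon$ of smooth closed $(n-q,n-q)$-forms with $\{T_\epsilon\} = \{T\}$. For each fixed $\epsilon$ the product $T_\epsilon \wedge \eta$ is a genuine smooth closed form cohomologous to $\{T\}\smallsmile\{\eta\}$, so, $T$ being a closed current, the cohomological pairing is realized honestly as $\int_X \{T\}^2 \smallsmile \{\eta\} = \langle T,\, T_\epsilon \wedge \eta\rangle$. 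Because $T$ is a foliated cycle, this last quantity unfolds in each flow box as an average $\int_\T \big(\int_{P_t} T_\epsilon \wedge \eta\big)\, d\mu(t)$ over the plaques $P_t$.

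The heart of the argument is a local estimate in a flow box $U \cong \D^q \times \T$, where $T|_U = \int_\T [P_t]\, d\mu(t)$ and the plaques $P_t$ are disjoint holomorphic graphs. I would build $T_\epsilon$ by convolving the transverse measure in the $\T$-direction with a mollifier of width $\epsilon$, so that locally $T_\epsilon$ is an average $\iint [P_s]_\epsilon$ of $\epsilon$-regularized plaques. The key geometric point is that two distinct plaques $P_s$ and $P_t$ are \emph{disjoint} graphs, so their regularizations can overlap only when the transverse parameters satisfy $|s-t| \lesssim \epsilon$; here the transversally Lipschitz hypothesis is essential, since it bounds the slopes of the graphs uniformly and thereby controls the transverse thickness of each regularized plaque independently of $\epsilon$. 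This yields a bound of the form $\big|\int_{P_t} T_\epsilon \wedge \eta\big| \le C \int_{\{|s-t|\le C\epsilon\}} d\mu(s)$, with $C$ depending only on $\eta$ and the Lipschitz constant of the lamination.

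Integrating this estimate against $d\mu(t)$ and summing over a finite cover of $\supp T$ by flow boxes, the whole pairing is controlled by $C \iint_{\{|s-t|\le C\epsilon\}} d\mu(s)\, d\mu(t)$, that is, by the $\mu\times\mu$-measure of a shrinking neighborhood of the diagonal. As $\epsilon \to 0$ this converges to the $\mu\times\mu$-measure of the diagonal itself, which vanishes precisely because $T$ is diffuse, i.e.\ because $\mu$ has no atoms. Since the left-hand side $\int_X \{T\}^2 \smallsmile \{\eta\}$ does not depend on $\epsilon$, it must therefore be $0$ for every test class $\{\eta\}$, and the theorem follows.

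I expect the main obstacle to be the twofold technical cost of the regularization. First, the transverse convolutions defined in individual flow boxes must be patched into a single smooth, \emph{closed} form on all of $X$ whose cohomology class is exactly $\{T\}$: on overlaps of flow boxes the transverse coordinate changes by the holonomy, and these identifications must be reconciled without destroying either closedness or the uniform mass estimate. Second, the transverse overlap estimate has to be carried out with only Lipschitz---rather than $C^1$---regularity of the leaves, so that the tangent planes of the plaques need not vary continuously; all the control on the geometry of nearby plaques must be extracted from the Lipschitz bound alone, which is exactly the point at which the hypothesis on the lamination is consumed.
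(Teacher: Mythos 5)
Your local analysis is sound and in fact mirrors the analytic core of the paper: the bi-Lipschitz control of the transverse parameter shows that two plaques can ``interact at scale $\epsilon$'' only when their parameters satisfy $\|s-t\|\lesssim\epsilon$, and the whole pairing is then dominated by the $\mu\otimes\mu$-measure of a shrinking neighbourhood of the diagonal, which tends to zero because $\mu$ has no atoms (this is exactly Lemma \ref{lemma:mass-diagonal} and the dichotomy $\|\alpha_2\|\leq|\lambda|^{-1}$ versus $\|\alpha_2\|>|\lambda|^{-1}$ in the paper). The genuine gap is the step you defer to the end: the construction of a globally defined \emph{smooth closed} form $T_\epsilon$ in the class $\{T\}$ that is adapted to the lamination, i.e.\ expressible in each flow box as an average of thickened plaques. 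Transverse mollification is a chart-dependent operation: on an overlap $U_i\cap U_j$ the transverse coordinate changes by the holonomy $\gamma_{ij}$, which is only a Lipschitz homeomorphism, so the mollifications performed in the two charts do not agree, and gluing them with a partition of unity produces a form that is no longer closed (and whose exact error is not obviously negligible against the singular current $T$). This is not a routine technicality but the historical obstruction: the argument you propose is essentially that of Forn\ae ss--Sibony in \cite{fornaess-sibony:finite-energy}, and there the regularization is obtained by averaging $g_*T$ over $g$ in a neighbourhood of the identity in $\mathrm{Aut}(\pr^2)$, which preserves closedness, positivity and the cohomology class precisely because $\pr^2$ is homogeneous. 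On a general compact K\"ahler manifold no such transitive group action exists, and no substitute regularization adapted to the lamination is known; general regularizations of closed positive currents (Demailly type) lose the laminar structure and cannot feed your overlap estimate.

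The paper circumvents this by never regularizing either factor. It forms $\mathbf T=T\otimes T$ on $X\times X$ and studies its tangent currents along the diagonal in the sense of Dinh--Sibony: the role of your parameter $\epsilon$ is played by the dilation $A_\lambda(z,w)=(z,\lambda w)$ normal to $\Delta$, the local computation (Theorem \ref{thm:local-comp}) legitimizes working in a single flow box with no gluing required, and Proposition \ref{prop:cup-product} converts the vanishing of the limit currents into the vanishing of $\{T\}\smallsmile\{T\}$. So your estimates would survive essentially verbatim inside that framework, but as written the proposal rests on a regularization whose existence is exactly the point at issue; to complete a proof along your lines you would either need to restrict to homogeneous manifolds or replace the regularization by the tangent-current formalism.
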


The proof uses a notion of intersection of currents introduced by Dinh and Sibony in \cite{dinh-sibony:density}, which is denoted by the symbol $\curlywedge$. Our arguments also  show that $T \curlywedge T = 0$ if $q \geq \frac{n}{2}$, see Section \ref{sec:global-result}. This is related to a theorem of Dujardin \cite{dujardin:intersection} saying that for a foliated cycle $T$ defined on an open set of $\C^2$, the wedge product $T \wedge T$ vanishes as long as it is well-defined in the usual sense of Pluripotential Theory. Our theorem extends this result in two ways, firstly it works in any dimension and codimension (as long as $q \geq \frac{n}{2}$) and secondly, no a priori regularity of $T$ is assumed. Nevertheless we need the underlying lamination to be transversally Lipschitz and the current $T$ to be globally defined.

As an application of Theorem \ref{thm:main-thm} we can show that laminations of low codimension on certain Kähler manifolds  carry no diffuse foliated cycles (see section \ref{sec:applications} for other examples).

\begin{corollary} \label{cor:main-cor}
Let $\mathcal L$ be an embedded transversally Lipschitz lamination of dimension $q \geq n/2$ on $\pr^n$. Then there is no diffuse foliated cycle directed by $\mathcal L$. In particular, if  $\mathcal L$ has no compact leaves then there is no foliated cycle directed by $\mathcal L$.
\end{corollary}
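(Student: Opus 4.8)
The plan is to deduce the Corollary from Theorem \ref{thm:main-thm} together with the very simple structure of the cohomology ring of $\pr^n$. Suppose, for a contradiction, that $T$ is a nonzero diffuse foliated cycle of dimension $q \geq n/2$ directed by $\mathcal L$. Since $\mathcal L$ is embedded and transversally Lipschitz and $\pr^n$ is K\"ahler, Theorem \ref{thm:main-thm} applies and yields $\{T\} \smallsmile \{T\} = 0$ in $H^{2n-2q,2n-2q}(\pr^n)$. I will show this is impossible because the self-intersection is forced to be a strictly positive multiple of a nonzero generator.

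Recall that $H^{p,p}(\pr^n) \cong \C$ for $0 \leq p \leq n$, generated by $\omega^p$ where $\omega$ denotes the Fubini--Study form, while $H^{p,p}(\pr^n) = 0$ for $p > n$. Consequently $\{T\} = c\, \omega^{n-q}$ for some $c \in \C$. To see that $c > 0$ I would pair with $\omega^q$: since $T$ is a nonzero positive closed current and $\omega^q$ is a strictly positive smooth form, $\int_{\pr^n} T \wedge \omega^q > 0$, while this integral equals $c \int_{\pr^n} \omega^n$; as $\int_{\pr^n}\omega^n > 0$ we conclude $c > 0$. Therefore $\{T\}\smallsmile\{T\} = c^2\, \omega^{2(n-q)}$.

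Now comes the one place where the hypothesis $q \geq n/2$ enters. The class $\omega^{2(n-q)}$ is nonzero in $H^{2n-2q,2n-2q}(\pr^n)$ precisely when $2(n-q) \leq n$, that is, when $q \geq n/2$. Hence $\{T\}\smallsmile\{T\} = c^2 \omega^{2(n-q)} \neq 0$, contradicting Theorem \ref{thm:main-thm}. This rules out the existence of a diffuse foliated cycle and proves the first assertion.

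For the last sentence I would reduce an arbitrary foliated cycle to the diffuse case. A foliated cycle $T$ is given by a holonomy-invariant transverse measure $\mu$, which splits canonically into its purely atomic and its diffuse parts; since holonomy maps are homeomorphisms they preserve this splitting, so $T = T_{\mathrm{at}} + T_{\mathrm{diff}}$ with both summands foliated cycles. By Remark \ref{rmk:atoms-leaves} the atoms of $\mu$ correspond exactly to currents of integration along compact leaves, so if $\mathcal L$ has no compact leaves then $\mu$ has no atoms and $T = T_{\mathrm{diff}}$ is diffuse. The first part then forces $T = 0$, so $\mathcal L$ carries no foliated cycle. The only genuinely delicate input is the identification of atomic transverse mass with compact leaves supplied by Remark \ref{rmk:atoms-leaves}; everything else is bookkeeping in the cohomology of $\pr^n$.
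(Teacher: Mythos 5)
Your proof is correct and follows essentially the same route as the paper: the paper deduces the corollary from a slightly more general statement (any compact K\"ahler $X$ with $h^{n-q,n-q}(X)=1$ admits no diffuse foliated cycle of dimension $q\geq n/2$), whose proof is exactly your argument --- write $\{T\}=c\{\omega^{n-q}\}$ with $c>0$, observe $\{\omega^{2n-2q}\}\neq 0$ since $2n-2q\leq n$, and contradict Theorem \ref{thm:main-thm}. The reduction of the non-diffuse case to Remark \ref{rmk:atoms-leaves} is also the paper's intended argument.
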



The presence of a foliated cycles imposes strong conditions on the dynamics of the lamination and as noted above their existence is quite rare. Nevertheless, every lamination  by Riemann surfaces carries a  \textit{directed harmonic current}, this is a positive $\ddc-$ closed current directed by the leaves of $\mathcal L$ (see \cite{fornaess-sibony:laminations} for more details). This was proven in \cite{garnett} for foliations, in \cite{berndtsson-sibony} for continuous laminations with singularities and in \cite{sibony:pfaff} in greater generality. However, there are examples of laminations of dimension two that do not carry any directed harmonic current, see \cite{fornaess-sibony-wold}. We should also notice that the non-existence of foliated cycles for Riemann surface laminations can be used in some cases to show that directed harmonic currents are unique. This can be viewed as a type of unique ergodicity in this setting, see \cite{fornaess-sibony:finite-energy}, \cite{dinh-sibony:unique-ergodicity} and also \cite{perez-garrandes}.

The proof of Theorem \ref{thm:main-thm} consists of an application of the theory of densities of positive closed currents of Dinh and Sibony, \cite{dinh-sibony:density}. This notion can be used to define the intersection of positive closed currents in a quite general setting and it seems to be a well suited tool to deal with intersections of directed currents (see \cite{dinh-sibony:unique-ergodicity} for a recent application to foliations in $\pr^2$).

The paper is organized as follows. In Section \ref{sec:preliminaries} we set some notation and state the basic facts about laminations and foliated cycles. Section \ref{sec:densities} is devoted to a brief overview of the theory of densities. The main points are the fact that density classes encode the cohomological intersection (see Proposition \ref{prop:cup-product}) and that representatives of these classes can be computed using local coordinates, see Theorem \ref{thm:local-comp}. Therefore, Theorem \ref{thm:main-thm} can be proved using the local expression of a foliated cycle in a flow box, which is done in Section \ref{sec:proof-main-thm}. Finally, in Section \ref{sec:applications} we apply our theorem to some concrete examples.

\subsection*{Acknowledgements} This work was supported by a grant from Région Île-de-France and part of it was done during a visit to the National University of Singapore. I am thankful for their support. I would also like to express my gratitude to  T.--C. Dinh for his valuable advices. 

\section{Preliminaries}  \label{sec:preliminaries}
\subsection{Laminations by complex manifolds} \label{sec:laminations}

Let $\Sigma$ be a compact metric space. A \textit{lamination} $\mathcal L$ of $\Sigma$ by complex manifolds of dimension $q$ is an atlas of homeomorphisms $\varphi_i: U_i \to \mathbb D^q \times K_i$, where $\D^q$ is the unit disc on $\C^q$ and $K_i$ is a topological space such that the coordinate changes $\varphi_{ij} = \varphi_j \circ \varphi_i^{-1}$ are of the form
\begin{equation} \label{eq:foliated-charts}
\varphi_{ij}(z,t) = (f_{ij}(z,t), \gamma_{ij}(t)),
\end{equation}
with $f_{ij}$ holomorphic in the first variable.

We will call such an open set $U_i$ a \textit{flow box} and the sets $\Pi_t = \varphi_i^{-1}(\D^q \times \{t\})$ are called \textit{plaques}. A subset $L \subset \Sigma$ is called a \textit{leaf} of $\mathcal{L}$ if it is a minimal connected set with the property that if $L$ meets a plaque $\Pi$ then $\Pi \subset L$.

Let now $X$ be a compact complex manifold. We say that $\mathcal L$ is an \textit{embedded lamination} on $X$ if $\Sigma \subset X$,  the transversals $K_i$ are contained in $\D^{n-q}$ and the charts $\varphi_i^{-1}$ extend to homeomorphisms between $\D^q \times \D^{n-q}$ and an open set of $X$ which are holomorphic in the first coordinates.

The following lemma shows that, in holomorphic coordinates, the plaques are given by disjoint graphs of holomorphic functions.

\begin{lemma} \label{lemma:lamination-hol-coord}
Let $\mathcal L$ be a lamination of dimension $q$ embedded on $X$. Then for every point $p$ in a flow box $U_i$ there is an open set $V_i \ni p$ and a holomorphic chart $\psi_i: V_i \to \D^q \times \D^{n-q} $ centered at $p$ such that $$\psi_i(\Pi_t) = (z',h_t(z')),\,\, z \in \D^q,$$ where $h_t$ are uniformly bounded holomorphic maps with values in $\C^{n-q}$ depending continuously in $t$ such that $h_t(0) = t$ and $h_0 \equiv 0$.
\end{lemma}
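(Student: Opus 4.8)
The plan is to build the chart $\psi_i$ by composing an arbitrary holomorphic chart of $X$ near $p$ with a coordinate change that straightens the plaque through $p$, and then to show that the neighbouring plaques are automatically graphs over the first factor. Write $\Phi_i \colon \D^q \times \D^{n-q} \to X$ for the holomorphic-in-the-first-variable extension of $\varphi_i^{-1}$ supplied by the definition of an embedded lamination, and say $p = \Phi_i(z_0, t_0)$. Composing with a translation in the $z$-variable we may assume $z_0 = 0$.

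First I would record that each plaque is an embedded $q$-dimensional complex submanifold near $p$. For fixed $t$ the map $z \mapsto \Phi_i(z,t)$ is holomorphic and, since $\Phi_i$ is a homeomorphism onto its image, injective; an injective holomorphic map is an immersion, so $\Pi_{t_0}$ is a $q$-dimensional complex submanifold through $p$, embedded because $\Phi_i$ is a topological embedding. By the implicit function theorem there is a holomorphic chart $\eta \colon V \to \C^n$ centered at $p$, with coordinates $(z', w') \in \C^q \times \C^{n-q}$, in which $\Pi_{t_0}$ is the slice $\{w' = 0\}$. Relabelling $t_0$ as $0$ yields $h_0 \equiv 0$.

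The key step is to see that $\Pi_t$ for $t$ near $0$ is a graph $w' = h_t(z')$ over the first factor. Since $\Phi_i$ is continuous, $\Phi_i(\cdot, t) \to \Phi_i(\cdot, 0)$ uniformly on compact subsets of $\D^q$ as $t \to 0$; because these maps are holomorphic in $z$, the Cauchy estimates upgrade this to uniform convergence of the first-order $z$-derivatives. Consequently the tangent planes of $\Pi_t$ converge to those of $\Pi_0 = \{w' = 0\}$ and, after shrinking $V$, stay transverse to the fibres $\{z' = \text{const}\}$. Transversality makes the projection $(z', w') \mapsto z'$ a local biholomorphism on each $\Pi_t$, while disjointness of the plaques makes it injective; hence each $\Pi_t$ is the graph of a bounded holomorphic map $h_t$ defined on a fixed polydisc, which after rescaling we take to be $\D^q$. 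Re-indexing the plaques by their intersection point with the transversal $\{z' = 0\}$ gives the normalization $h_t(0) = t$, and continuity of $h_t$ in $t$ in the uniform topology follows from the continuity of $\Phi_i$ together with the graph extraction. Setting $\psi_i := \eta$, suitably rescaled, completes the construction.

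The main obstacle is the transfer of transversality from the central plaque to the neighbouring ones: although $\Phi_i$ is merely continuous, not holomorphic, in the transverse variable $t$, holomorphicity in $z$ is precisely what lets the Cauchy estimates promote uniform convergence of the discs into uniform convergence of their derivatives, so that the graph property survives on a uniform neighbourhood. One must also check that the common domain of the $h_t$ can be taken to be all of $\D^q$ after rescaling, and that the re-indexing $t \mapsto h_t(0)$ is a homeomorphism onto a neighbourhood of $0$ in the transversal.
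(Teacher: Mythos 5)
Your construction follows the same route as the paper's sketch: straighten the central plaque with a holomorphic chart, observe that nearby plaques project bijectively onto the first factor and are therefore graphs, then normalize the transverse parameter. Your normalization differs slightly but harmlessly: you re-index the plaques by $t\mapsto h_t(0)$, whereas the paper keeps the original transverse parameter and post-composes each $f_t$ with a linear isomorphism of $\C^{n-q}$ sending $f_t(0)$ to $t$; your version in fact matches more directly the way the lemma is used in Section 4, where the plaques are written $L_a=\{x''=h_a(x')\}$ with $a=h_a(0)$. Your use of Cauchy estimates to upgrade $\mathcal C^0$-closeness of the plaques to $\mathcal C^1$-closeness, hence transversality to the fibres of the projection, is a genuine addition of detail over the paper's one-line assertion that the projection is one-to-one on each $\psi_i(\Pi_t)$.

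Two points need repair. First, the claim that ``an injective holomorphic map is an immersion'' is false: $z\mapsto(z^2,z^3)$ is injective and holomorphic from $\D$ into $\C^2$ but is not an immersion at the origin. You invoke this to show the central plaque is an embedded $q$-dimensional submanifold, so as written this step fails. The correct source of that fact is the definition of an embedded lamination itself (the plaques are, by hypothesis, holomorphic graphs over a $q$-dimensional polydisc, as stated in the introduction), or equivalently one must assume the extended charts are holomorphic immersions in the first variable; it cannot be deduced from injectivity alone. Second, injectivity of the projection $(z',w')\mapsto z'$ restricted to a single plaque $\Pi_t$ does not follow from disjointness of distinct plaques, which says nothing about two points of the \emph{same} plaque having equal $z'$-coordinate. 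What does give it is the convergence you already established: the maps $z\mapsto\pi(\eta(\Phi_i(z,t)))$ converge locally uniformly to the injective holomorphic map $z\mapsto\pi(\eta(\Phi_i(z,0)))$, and a Hurwitz-type argument (or a degree argument) shows they are eventually injective on a fixed compact polydisc. With these two corrections the argument is complete and agrees with the paper's.
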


\begin{proof}[Sketch of the proof] Let $p$ be a point in a flow box $U_i$ and let $\varphi_i: U_i \to \D^q \times \D^{n-q}$ be a foliated chart centered at $p$. This is a homeomorphism that restricts to a holomorphic map on each plaque  $\Pi_t = \varphi_i^{-1}(\D^q \times \{t\})$. Choose a \textit{holomorphic} chart $\psi_i: V_i \to \D^q \times \D^{n-q}$ centered at $p$ such that $\psi_i(\Pi_0)$ is given by $z'' = 0$, where $z = (z',z'')$ are the standard coordinates in $ \D^q \times \D^{n-q}$. The projection on the first coordinate $\D^q \times \D^{n-q} \to \D^q$ restricts to a one--to--one holomorphic map on each $\psi_i(\Pi_t)$, so these sets are given by disjoint holomorphic graphs $(z',f_t(z'))$ varying continuously with $t$. We can take $h_0 = f_0 \equiv 0$ and  $h_t(z') = \psi_t(f_t(z'))$ for $t \neq 0$, where $\psi_t$ is a continuous family of linear isomorphims of $\C^{n-q}$ mapping $f_t(0)$ to $t$.
\end{proof}

\begin{remark}
The continuity of the $h_t$ in $t$ in the above lemma is with respect to the $\mathcal C^\infty$ topology. Notice that, since the $h_t$ are holomorphic, this equivalent to the continuity with respect to the $\mathcal C^0$ topology. This can be seen from Cauchy's formula.
\end{remark}

In what follows we will also call an open set $V_i$ as in Lemma \ref{lemma:lamination-hol-coord} a \textit{flow box}.

\subsection{Foliated cycles}

Let $\mathcal L$ be a lamination of dimension $q$ embedded on $X$. A \textit{foliated cycle} directed by $\mathcal L$ is a positive closed current $T$ on $X$ of bi-dimension $(q,q)$ which in a flow box $\varphi_i: U_i \to \mathbb D^q \times K_i$ is given by $$T = \int_{K_i} [{\D^q \times \{t\}}] \, \diff \mu_i(t),$$ where $[\D^q \times \{t\}]$ denotes the current of integration along $\D^q \times \{t\}$ and $\mu_i$ is a positive measure on $K_i$. More precisely, if $\alpha$ is a differential $(q,q)-$form with compact support in $U_i$, then 
\begin{equation*}
\langle T, \alpha \rangle = \int_{K_i} \left(\int_{\D^q \times \{t\}} (\varphi_i)_*\alpha \right ) \diff \mu_i(t).  
\end{equation*}

The fact that the above expressions glue together to give a globally defined current corresponds to the invariance relation $$ \mu_i(E) = \mu_j (\gamma_{ij}(E)) \, \text{ for every Borel set } E \subset K_i,$$ where $\gamma_{ij}: K_i \to K_j$ are as in (\ref{eq:foliated-charts}). This gives a correspondence between foliated cycles and transverse invariant measures. We will say that $T$ is diffuse if in every flow box the corresponding measure $\mu_i$ is diffuse (i.e.\, does not charge points).

\begin{remark}
A current $T$ as above is sometimes called strongly directed or uniformly laminar. There is also a notion of weakly directed current (cf. \cite{fornaess-sibony:laminations}, Definition $6$). These notions coincide for foliations due to a classical result of Sullivan, \cite{sullivan:cycles}. In \cite{fornaess-wang-wold} the authors show that every weakly directed current is a foliated cycle when $n = 2$. They also give a counter-example in dimension $3$.
\end{remark}

\begin{example}
Suppose that $\mathcal L$ has a compact leaf $L$. Then the current of integration along $L$ is a foliated cycle directed by $\mathcal L$. The transverse measures in this case are Dirac masses.
\end{example}

\begin{remark} \label{rmk:atoms-leaves}
Conversely, we can show that if in some flow box the transverse measure of a foliated cycle $T$ contains an atom then $T$ ``contains'' the current of integration along a compact leaf $L$ of $\mathcal L$, that is, $T = T' + c[L]$ where $T'$ is a foliated cycle directed by $\mathcal L$ and $c>0$ is a constant. In particular, if a lamination $\mathcal L$ has no compact leaves then any foliated cycle directed by $\mathcal L$ is diffuse.
\end{remark}

\begin{example}[Ahlfors' currents] \label{ex:ahlfors}
Suppose that $\mathcal L$ is a lamination by Riemann surfaces, that is $q=1$. If $\mathcal L$ admits a parabolic leaf $L$ (i.e., a leaf uniformized by the complex plane) then the uniformizing map $\phi: \C \to L$ gives rise to foliated cycles directed by $\mathcal L$. More precisely, let $a_r$ be the area of $\phi(\D_{r})$ and $\ell_r$ be the length of $\phi(\del \D_{r})$.  If $r_n$ is a sequence tending to infinity such that $\lim_{r_n \to +\infty} \frac{\ell_{r_n}}{a_{r_n}} = 0$, then the normalized currents of integration along $\phi(\D_{r_n})$ converge to a closed positive current directed by $\mathcal L$. Ahlfors showed that such sequences always exist. We can also produce foliated cycles using different notions of parabolicity (see Remark \ref{rmk:parabolicity}).
\end{example}

\section{Density of positive closed currents} \label{sec:densities}
We review in this section the basic facts of the theory of densities of positive closed currents introduced by T.--C.Dinh and N. Sibony. This will be the main tool in the proof of Theorem \ref{thm:main-thm}.  We refer to \cite{dinh-sibony:density} for the complete exposition.

\subsection{Tangent currents}
Let $T$ be a positive closed current of bi--degree $(p,p)$ defined in a neighborhood of the origin in $\C^n$. The Lelong number of $T$ at $0$ is defined by $$\nu(T,0) = \lim_{ r \to 0} \frac{\|T\|_{B(0,r)}}{(2\pi)^{n-p}r^{2n-2p}},$$ where $\|T\| = T \wedge \beta^{n-p}$ is the trace measure with respect to the standard Kähler form $\beta$ on $\C^n$.

The Lelong number can be characterized by the following geometric construction. Let $A_\lambda:\C^n \to \C^n$ be defined by $A_\lambda(z) = \lambda z$ for $\lambda \in \C^*$. Then the family of currents $T_\lambda = (A_\lambda)_* T$, $|\lambda| \geq 1$ is relatively compact and any limit current $S$ is defined in all of $\C^n$. We can view $S$ as a current in $\pr^n$  by considering its trivial extension through the hyperplane at infinity.

\begin{lemma}
The class of $S$ in $H^{2p}(\pr^n,\C)$ is equal to $\nu(T,0)$ times the class of a linear subspace of dimension $n-p$.
\end{lemma}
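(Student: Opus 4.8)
The plan is to use that $H^{2p}(\pr^n,\C)$ is one--dimensional, so that the whole statement reduces to computing a single nonnegative number. Let $L\cong\pr^{n-p}$ be a linear subspace of dimension $n-p$, so that $\{L\}$ generates $H^{2p}(\pr^n,\C)$ and equals $[\omega_{FS}]^p$, where $\omega_{FS}$ is the Fubini--Study form normalized by $\int_{\pr^n}\omega_{FS}^n=1$. Since $S$ is a positive closed current of bidegree $(p,p)$ on $\pr^n$, its class is necessarily of the form $\{S\}=c\,\{L\}$ for a unique constant $c\geq 0$, and everything comes down to showing $c=\nu(T,0)$. To extract $c$ I would pair with $[\omega_{FS}]^{n-p}$: since $S$ is closed and $\omega_{FS}$ is smooth and closed,
\[
c=\langle\{S\},[\omega_{FS}]^{n-p}\rangle=\int_{\pr^n}S\wedge\omega_{FS}^{n-p},
\]
using $\langle\{L\},[\omega_{FS}]^{n-p}\rangle=\int_{L}\omega_{FS}^{n-p}=1$. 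Because $S$ is the trivial extension through the hyperplane at infinity $H_\infty$, it has no mass on $H_\infty$, so this Fubini--Study mass equals $\int_{\C^n}S\wedge\omega_{FS}^{n-p}=\lim_{R\to\infty}\int_{B(0,R)}S\wedge\omega_{FS}^{n-p}$.

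The core computation is the behaviour of the Euclidean trace mass on balls under the dilations. A change of variables, using $A_\lambda^*\beta=|\lambda|^2\beta$ and $A_\lambda^{-1}(B(0,R))=B(0,R/|\lambda|)$, gives
\[
\|T_\lambda\|_{B(0,R)}=|\lambda|^{2(n-p)}\,\|T\|_{B(0,R/|\lambda|)}.
\]
Letting $|\lambda|\to\infty$, so that $R/|\lambda|\to 0$, and invoking the very definition of the Lelong number, the right-hand side tends to $\nu(T,0)\,(2\pi)^{n-p}R^{2(n-p)}$. Passing to a subsequence with $T_{\lambda_k}\to S$ and choosing $R$ outside the at most countable set of radii for which $S$ charges $\partial B(0,R)$, I obtain
\[
\|S\|_{B(0,R)}=\int_{B(0,R)}S\wedge\beta^{n-p}=\nu(T,0)\,(2\pi)^{n-p}R^{2(n-p)}.
\]
In particular every limit current $S$ has the same radial mass distribution, already pinned down by $\nu(T,0)$.

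The remaining, and in my view the only delicate, step is to convert this Euclidean ball-mass asymptotic into the global Fubini--Study mass $\int_{\C^n}S\wedge\omega_{FS}^{n-p}=c$. The difficulty is that $S$ does not concentrate near the origin — its $\beta$--mass is in fact infinite — so one cannot merely replace $\omega_{FS}$ by $\beta$, and a priori the answer could depend on the unknown angular distribution of the mass of $S$. This is exactly where the closedness of $S$ enters. I would compare $\omega_{FS}$ with the dilation-invariant form $\alpha=\frac{1}{\pi}\,\ddc\log|z|$, for which $A_\lambda^*\alpha=\alpha$ and which differs from $\omega_{FS}$ by $\ddc$ of a function vanishing at infinity; a Lelong--Jensen / Stokes comparison for the closed current $S$ then shows that $\int_{\C^n}S\wedge\omega_{FS}^{n-p}$ is insensitive to the angular distribution and is governed purely by the radial growth rate $\|S\|_{B(0,R)}/\big((2\pi)^{n-p}R^{2(n-p)}\big)$ found above, namely $\nu(T,0)$. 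Equivalently, one may pair $\{S\}$ with the class of a generic $p$--dimensional linear subspace and compute the resulting slice; the two routes lead to the same Lelong-number identity.

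The main obstacle is precisely making this last conversion rigorous, that is, controlling the behaviour of $S$ near $H_\infty$ and showing that the limit Fubini--Study mass is read off from the radial growth rate alone. Once this is established the scaling computation yields $c=\nu(T,0)$, hence $\{S\}=\nu(T,0)\,\{L\}$, as claimed.
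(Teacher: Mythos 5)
The paper does not actually prove this lemma --- it is quoted from Dinh--Sibony's theory of tangent currents --- so your proposal can only be judged on its own terms. Your overall strategy is the standard one and is sound: since $H^{2p}(\pr^n,\C)$ is generated by $\{L\}=\{\omega_{FS}\}^p$ and $S$ is positive closed, everything reduces to the single number $c=\int_{\pr^n}S\wedge\omega_{FS}^{n-p}$, and your scaling identity $\|T_\lambda\|_{B(0,R)}=|\lambda|^{2(n-p)}\|T\|_{B(0,R/|\lambda|)}$ correctly yields $\|S\|_{B(0,R)}=\nu(T,0)\,(2\pi)^{n-p}R^{2(n-p)}$ for every $R$ (after discarding the countably many radii where $S$ charges a sphere). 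Two points deserve to be made explicit rather than left as ``the main obstacle''. First, before pairing $\{S\}$ with $\{\omega_{FS}\}^{n-p}$ you need the trivial extension of $S$ through $H_\infty$ to be a \emph{closed} positive current on $\pr^n$; this is exactly Skoda--El Mir, and the hypothesis it requires (locally finite mass near $H_\infty$ for the Fubini--Study metric) is itself a consequence of the ball-mass growth you just computed, so the two steps should be done in the right order. Second, the conversion from the Euclidean ball masses to the Fubini--Study mass is precisely the classical Lelong--Jensen formula: with $\nu(S,r):=\|S\|_{B(0,r)}/((2\pi)^{n-p}r^{2(n-p)})$ one has $\nu(S,r)-\nu(S,s)=\int_{s<|z|<r}S\wedge(\ddc\log|z|)^{n-p}$ up to normalization, and $\int_{\pr^n}\tilde S\wedge\omega_{FS}^{n-p}=\lim_{r\to\infty}\nu(S,r)$; note that the comparison of $\omega_{FS}=\ddc\tfrac12\log(1+|z|^2)$ with $\ddc\log|z|$ is clean at infinity (the potentials differ by $\tfrac12\log(1+|z|^{-2})\to0$) but singular at the origin, where the term $\nu(S,0)$ reappears --- in your situation this causes no harm because $\nu(S,r)$ is \emph{constant} equal to $\nu(T,0)$, so the limit at infinity, the Lelong number at $0$, and hence $c$ all coincide with $\nu(T,0)$. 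With a citation of (or a short proof of) the Lelong--Jensen comparison, your argument is complete and gives exactly the statement of the lemma.
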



Now let $T$ be a positive closed $(p,p)-$current defined on a compact Kähler manifold $X$ and let $V \subset X$ be a compact complex submanifold. Denote by $E$ the normal bundle to $V$ in $X$ and $\overline E$ its canonical compactification, defined as $\overline E = \pr(E \oplus \C)$. The multiplication by $\lambda \in \C^*$ along the fibers defines an endomorphism $A_\lambda: E \to E$ that extends trivially to $\overline E$.

Inspired by the above discussion we may want to define a tangent of $T$ along $V$ as a limit of $T$ under the dilations $A_\lambda$. The main problem is that we cannot transport $T$ to a current in $E$ in a holomorphic way. Nevertheless, there are maps $\tau$  from a neighborhood of $V$ in $X$ and a neighborhood of the zero section in $E$ that are diffeomorphisms and whose differential at $V$ is the identity in the normal direction (see \cite{dinh-sibony:density}, Definition 2.14). These maps are called admissible and they suffice to define tangent currents.

\begin{theorem} [Existence of tangent currents, \cite{dinh-sibony:density} Thm 1.1] \label{thm:existence-tangent-currents}
Let $X$,$V$,$T$,$E$,$\overline E$, $A_\lambda$ and $\tau$ be as above. Then the family of currents $T_\lambda = (A_\lambda)_* \tau_* T$ on $E$ is relatively compact. If $S$ is any of its cluster values, viewed as a positive closed current in $E$ extended by zero to $\overline E$, then its cohomology class in $H^{2p}(\overline E,\C)$ is independent of the choice of  $\, \tau$ and $S$.
\end{theorem}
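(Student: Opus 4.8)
The plan is to prove the three assertions in turn: relative compactness of the family $\{T_\lambda\}$, positivity and closedness of its cluster values on $\overline{E}$, and the independence of the limiting cohomology class. For relative compactness I would first establish a uniform bound on the mass of $T_\lambda = (A_\lambda)_* \tau_* T$ on compact subsets of $E$. Fix a Hermitian metric on $E$ and a K\"ahler form on $\overline{E}$. Since $\tau_* T$ is positive and closed on a neighbourhood $W$ of the zero section and $A_\lambda$ preserves the fibration $E \to V$, the change of variables $A_\lambda$ expresses the mass $\|T_\lambda\|_K$ on a fixed compact $K$ as an integral of $\tau_* T$ against a rescaled trace form over a shrinking tube around $V$. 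The essential input is the analogue along $V$ of the monotonicity of Lelong numbers: for a positive closed current the suitably normalized mass of tubes of radius $r$ about $V$ stays bounded as $r \to 0$. This monotonicity, which uses positivity and closedness crucially, yields $\|T_\lambda\|_K \leq C_K$ uniformly for $|\lambda| \geq 1$, and relative compactness then follows from the weak-$*$ compactness of positive closed currents of bounded mass.

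Next, let $S$ be a cluster value of $T_\lambda$ as $|\lambda| \to \infty$. As a weak limit of positive closed currents it is positive and closed on every compact subset of $E$; since $T_\lambda$ lives on $A_\lambda(W)$, which exhausts $E$, the current $S$ is positive and closed on all of $E$. To regard $S$ on $\overline{E}$ I would extend it by zero across the divisor at infinity $H_\infty := \overline{E} \setminus E$. The uniform mass bounds show that this trivial extension has locally finite mass near $H_\infty$, and since $H_\infty$ is a complex hypersurface the Skoda--El Mir theorem guarantees that the extension is again positive and closed on $\overline{E}$, so that $[S] \in H^{2p}(\overline{E}, \C)$ is well defined.

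The crux is the independence of $[S]$ of the subsequence and of $\tau$. The guiding observation is that, $\C^*$ being connected, each $A_\lambda$ is isotopic to the identity and hence acts trivially on $H^*(\overline{E}, \C)$; the class can nevertheless shift in the limit because the trivial extension of $\tau_* T$ fails to be closed across $\del W$ and mass may concentrate on $H_\infty$. To pin the class down I would test $S$ against closed forms. Using the Leray--Hirsch description of $H^*(\overline{E})$, choose closed representatives $\Phi$ adapted to the fibration; then $\langle S, \Phi\rangle = \lim_j \langle \tau_* T, A_{\lambda_j}^* \Phi\rangle$, and the point is that $A_\lambda^* \Phi$ converges as $|\lambda| \to \infty$ to a fixed form singled out by the vertically homogeneous part of $\Phi$. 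This exhibits the limiting pairing as independent of the subsequence, so all cluster values determine the same class. For independence of $\tau$, two admissible maps have the same normal differential along $V$ and hence differ only by terms of higher order in the fibre; since $A_\lambda$ amplifies exactly the linear normal part while suppressing higher-order discrepancies, a homotopy between $\tau$ and $\tau'$ shows the two limiting classes agree.

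The main obstacle is this last step. Both the convergence of $A_\lambda^* \Phi$ and the cancellation of higher-order terms in $\tau$ hinge on a careful analysis of the fiberwise dilation near $H_\infty$, where the Jacobian factors of the form $\lambda^a \bar{\lambda}^b$ may oscillate or blow up. Showing that only the vertically homogeneous contributions survive in the limit, and that the mass escaping onto $H_\infty$ is correctly accounted for at the level of cohomology, is the technical heart of the theorem, and it is precisely here that the admissibility of $\tau$ — its differential being the identity in the normal direction — is indispensable.
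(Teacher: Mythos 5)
The paper does not prove this statement at all: it is quoted verbatim from Dinh--Sibony (\cite{dinh-sibony:density}, Theorem 1.1) and used as a black box, so there is no in-paper argument to compare your proposal against. Judged on its own terms, your outline does identify the correct skeleton of the original Dinh--Sibony proof --- uniform mass bounds plus weak-$*$ compactness, Skoda--El Mir to extend cluster values across the hypersurface at infinity, and a pairing/homotopy argument for the independence of the class --- but the two steps that carry all the difficulty are asserted rather than proved.

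First, the mass bound. For a point ($V=\{0\}$) the monotonicity of $r\mapsto \|T\|_{B(0,r)}/r^{2(n-p)}$ is classical, but for a positive-dimensional $V$ there is no off-the-shelf ``monotonicity for tubes'': the normalized mass of $r$-tubes around $V$ need not be monotone, and controlling $\|(A_\lambda)_*\tau_*T\|_K$ requires the Lelong--Jensen-type integration-by-parts estimates that Dinh--Sibony develop precisely for this purpose (comparing $A_\lambda^*$ of a K\"ahler form on $E$ with combinations of $\pi^*\omega_V$ and fiberwise potentials, and using closedness of $T$ to absorb the blowing-up terms). Invoking ``the analogue along $V$ of the monotonicity of Lelong numbers'' as a known fact begs the question. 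Second, the independence of the class: you cannot simply pair $S$ with closed forms $\Phi$ on $\overline E$ and write $\langle S,\Phi\rangle=\lim_j\langle \tau_*T, A_{\lambda_j}^*\Phi\rangle$, because $\tau_*T$ is defined only on a neighborhood of the zero section and its trivial extension is not closed, so the pairing with a cohomology class is not well defined before passing to the limit; moreover $A_\lambda^*\Phi$ does not converge as a smooth form (its coefficients contain factors $\lambda^a\bar\lambda^b$), so isolating the ``vertically homogeneous part'' and showing the error terms die against $\tau_*T$ is exactly the hard analysis, as is the interpolation between two admissible maps $\tau,\tau'$ (which are only $\mathcal C^1$-tangent to the identity in the normal direction, not holomorphic). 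You correctly flag this as the crux, but flagging it is not closing it: as written the proposal is a plan for a proof, not a proof.
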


\begin{definition} \label{def:tangent-class}
A current $S$ on $\overline E$ as in Theorem \ref{thm:existence-tangent-currents} is called a \textit{tangent current} to $T$ along $V$. The class $\{S\} \in H^{2p}(\overline E, \C)$, which is independent of $S$, is called the \textbf{total tangent class} of $T$ along $V$ and it is denoted by $\kappa^V(T)$.
\end{definition}

\subsubsection{Local computation of tangent currents}
An important feature of the notion of tangent currents is that they can be computed using local coordinates (even though the current $T$ must be globally defined). This property will be crucial in the proof of Theorem \ref{thm:main-thm}.

Let $V \subset X$ as above and denote $\ell = \dim V$ and $n = \dim X$. Let $V_0$ be a small open set of $V$ and $U_0$ a small neighborhood of $V_0$ in $X$. We may assume that $U_0$ is biholomorphic to $\D^n =  \D^\ell \times \D^{n-\ell}$ via local holomorphic coordinates $x = (x',x'')$ in which $V_0$ is given by $\{x'' = 0\}$. In these coordinates the normal bundle $E$ to $V_0$ is holomorphically identified with the trivial $\C^{n-\ell}-$ bundle over $\D^\ell$ and $V_0$ is  identified with its zero section.

Denote by $\tau: U_0 \to \D^\ell \times \D^{n-\ell}$ the above holomorphic chart. 
Let $A_\lambda (x',x'') = (x',\lambda x'')$, $\lambda \in \C^*$ be the multiplication along the fibers of $E$ and define $$T_\lambda = \tau^* \, (A_\lambda)_* \, \tau_* \, T.$$ The following result follows from \cite{dinh-sibony:density}, Proposition 4.4.

\begin{theorem}[Local computation] \label{thm:local-comp}
The family $\{T_\lambda, |\lambda| \geq 1\}$ is relatively compact. Moreover, if $(\lambda_n)$ is a sequence converging to infinity such that $T_{\lambda_n} \to S$, then $S$ is a positive closed $(p,p)-$current on $E|_{V_0}$ independent of the choice of the local chart $\tau$ as above. 
\end{theorem}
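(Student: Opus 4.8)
The plan is to deduce Theorem \ref{thm:local-comp} from the already-stated global result, Theorem \ref{thm:existence-tangent-currents}, by carefully comparing the local pushforward-dilation construction performed through the holomorphic chart $\tau$ with the global admissible-map construction, and by checking that the difference between a holomorphic chart and an admissible map is negligible in the limit $|\lambda|\to\infty$. The crux is that both relative compactness and independence of the limit from the chart are essentially \emph{local} statements near $V_0$, so one should reduce the global theorem to a statement about a neighborhood of $V_0$ and then identify the two constructions there.

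\medskip

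First I would establish relative compactness of $\{T_\lambda : |\lambda|\geq 1\}$. Since the $T_\lambda$ are positive and closed, by the standard compactness of positive closed currents it suffices to bound their mass on compact subsets of $E|_{V_0}$ uniformly in $\lambda$. The key point is that the mass of $(A_\lambda)_*\tau_* T$ on a fixed compact set $K\subset E$ equals the mass of $\tau_* T$ on $A_\lambda^{-1}(K)$, a region that shrinks toward the zero section $V_0$ as $|\lambda|\to\infty$; because $T$ has locally finite mass near $V$ and the dilations $A_\lambda$ act isometrically in the fiber direction up to the factor $|\lambda|^{2(n-p-\cdots)}$ absorbed by the $(p,p)$ bidegree, the masses stay bounded. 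Concretely, I would invoke Theorem \ref{thm:existence-tangent-currents}, which already asserts relative compactness of $(A_\lambda)_*\tau_* T$ as currents on $E$ (for a \emph{global} admissible $\tau$); the local holomorphic chart is itself an admissible map over $V_0$ since its differential along $V_0$ is the identity in the normal direction (the chart sends $V_0$ to the zero section and is holomorphic, hence linear to first order in $x''$). Thus the local family is just the restriction to $E|_{V_0}$ of an instance of the global family, and relative compactness is inherited immediately.

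\medskip

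Next I would prove that any cluster value $S$ is a positive closed $(p,p)$-current on $E|_{V_0}$: positivity and closedness pass to weak limits, the bidegree is preserved under the pullback $\tau^*(A_\lambda)_*\tau_*$, and the limit is supported in $E|_{V_0}$ because the construction is carried out entirely within the chart $U_0$. The substantive claim is \textbf{independence of the chart}. Here the strategy is to compare two local holomorphic charts $\tau_1,\tau_2$ over the same $V_0$ via the transition map $g=\tau_2\circ\tau_1^{-1}$, which fixes $V_0$ and is holomorphic. Writing $g(x',x'')=(x',x'')+O(|x''|)$ and conjugating by the dilations, one checks that $A_\lambda\circ g\circ A_\lambda^{-1}$ converges as $|\lambda|\to\infty$ to the fiberwise linear map given by the normal derivative $\partial g/\partial x''|_{x''=0}$ — that is, to the linearization of $g$ along the normal bundle, which is exactly the transition map for $E$ as a holomorphic vector bundle. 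This is the standard ``zooming'' computation showing that higher-order terms $O(|x''|^2)$ die under the rescaling; it is precisely the mechanism by which Theorem \ref{thm:existence-tangent-currents} guarantees the tangent \emph{class} is well defined, and the cited \cite{dinh-sibony:density} Prop.~4.4 upgrades this to independence of the limiting current $S$ itself (not merely its class) in the holomorphic setting. I expect this linearization-under-dilation step to be the main obstacle, since it requires controlling the pushforward of $T$ under a family of maps converging only in a weak sense and showing that the error currents tend weakly to zero; the estimate again rests on the uniform mass bounds from the first step together with the explicit $O(|x''|^2)$ decay of the transition error.

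\medskip

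Finally, to conclude, I would note that the local chart $\tau$ extends to a \emph{global} admissible map (or agrees with one on a neighborhood of $V_0$), so the limit $S$ produced locally coincides on $E|_{V_0}$ with the restriction of a genuine tangent current from Theorem \ref{thm:existence-tangent-currents}; this simultaneously confirms that $S$ is positive, closed, of bidegree $(p,p)$, and intrinsic to $V_0$, completing the reduction. The entire argument is thus a matter of recognizing holomorphic charts as admissible maps and transporting the global existence and well-definedness results to the local picture.
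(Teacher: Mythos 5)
The paper gives no proof of this statement: it is quoted verbatim from Dinh--Sibony (Proposition 4.4 of \cite{dinh-sibony:density}), so there is nothing internal to compare against. Your overall architecture --- recognize the holomorphic chart as (the restriction of) an admissible map so that relative compactness is inherited from Theorem \ref{thm:existence-tangent-currents}, then obtain chart-independence by showing that the conjugated transition map $A_\lambda\circ g\circ A_\lambda^{-1}$ converges to the fiberwise linearization $\partial g''/\partial x''|_{x''=0}$, which is precisely the transition function of $E$ --- is indeed the strategy of the cited proof, and the reduction of the local family to the global one on compact subsets (using that $A_\lambda^{-1}(K)$ eventually lies where the two maps agree) is sound.

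There is, however, one concretely false step. The mass of $(A_\lambda)_*\tau_*T$ on a compact set $K$ does \emph{not} equal the mass of $\tau_*T$ on $A_\lambda^{-1}(K)$ when $p<n$: for a positive $(p,p)$-current the mass is $\int T\wedge\omega^{n-p}$, and $A_\lambda^*\omega$ grows like $|\lambda|^2$ in the fiber directions, so the naive change of variables gives a bound that blows up. The uniform mass bound under dilation is exactly the hard content of Theorem 1.1 of \cite{dinh-sibony:density} (it uses closedness of $T$, the K\"ahler structure, and a global cohomological argument), and cannot be recovered by the ``shrinking region'' heuristic. Your argument survives only because you then fall back on citing Theorem \ref{thm:existence-tangent-currents}; you should delete the heuristic rather than present it as a justification. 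Similarly, the final convergence $(\Phi_\lambda)_*R_\lambda\to\Phi_*R$ for maps $\Phi_\lambda\to\Phi$ in $\mathcal C^1$ and currents $R_\lambda$ with uniformly bounded mass is asserted rather than proved; it requires ruling out escape of mass and is the genuine technical core of the chart-independence claim. As written, the proposal is an accurate reduction to the reference --- which is all the paper itself does --- but not yet a self-contained proof.
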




\subsection{Density classes and intersection product} Let $X$ be a compact Kähler manifold of dimension $n$. Given two positive closed currents $T_1$ and $T_2$ of bi--degrees $(p_i,p_i)$, $i=1,2$ on $X$ we will define the density of the pair $(T_1,T_2)$. When $T_2$ is the current of integration along a submanifold $V$ we recover above the notion of tangent current along $V$. 

Let $\mathbf T = T_1 \otimes T_2$ be the product current on $X \times X$. Notice that $\mathbf T$ is of bi-degree $(p,p)$, where $p = p_1 + p_2$. Denote by $\Delta \subset X \times X$ the diagonal. Let $E$ be the normal bundle to $\Delta$ in $X \times X$. It is a rank $n$ vector bundle and, under the natural isomorphism $\Delta \simeq X$, $E$ identifies with the tangent bundle of $X$. 

\begin{definition}
A tangent current to $\mathbf T$ along $\Delta$ is called a density current associated with $T_1,T_2$. The tangent class $\kappa(T_1,T_2) := \kappa^\Delta(\mathbf T)$ is called the \textbf{total density class} of $T_1,T_2$.
\end{definition}


These ideas can be further developed and culminate in the definition of an intersection product of $T_1$ and $T_2$ which is compatible with the cohomological cup product and coincides with classical notions of intersection. The interested reader may consult Section $5$ in \cite{dinh-sibony:density}. We state here only a particular case that we will use.
\begin{proposition} \label{prop:cup-product}
Suppose that $p_1 +p_2 \leq n$. If the total density class $\kappa(T_1,T_2)$ vanishes then $\{T_1\} \smallsmile \{T_2\} = 0$ in $H^{2n - 2p_1 - 2p_2}(X,\C)$.
\end{proposition}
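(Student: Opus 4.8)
The plan is to reduce the proposition to the single cohomological identity
\[
\{T_1\} \smallsmile \{T_2\} \;=\; i_0^{\,*}\,\kappa(T_1,T_2),
\]
where $i_0 \colon \Delta \simeq X \hookrightarrow \overline E$ is the inclusion of the zero section of the normal bundle $E$ to the diagonal. Once this is established the proposition is immediate: if the total density class $\kappa(T_1,T_2) = \kappa^\Delta(\mathbf T)$ vanishes then so does its restriction $i_0^{\,*}\kappa(T_1,T_2)$, and hence $\{T_1\} \smallsmile \{T_2\} = 0$. The right-hand side is identified with the cup product through the K\"unneth formula on the compact K\"ahler manifold $X \times X$: the class of $\mathbf T = T_1 \otimes T_2$ is the exterior product $\{T_1\} \times \{T_2\}$, so pulling back along the diagonal $\delta \colon X \to X \times X$ gives $\delta^{\,*}\{\mathbf T\} = \{T_1\} \smallsmile \{T_2\}$. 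Thus it suffices to prove $\delta^{\,*}\{\mathbf T\} = i_0^{\,*}\kappa(T_1,T_2)$.

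First I would track the restriction to the zero section along the limit that defines $\kappa^\Delta(\mathbf T)$, using $T_\lambda = (A_\lambda)_* \tau_* \mathbf T$ with an admissible map $\tau$ and the fiberwise dilations $A_\lambda$ of Theorem \ref{thm:existence-tangent-currents}. Since $\tau$ is a diffeomorphism from a neighborhood of $\Delta$ onto a neighborhood of the zero section that restricts to the identity on $\Delta$, restriction of classes along the zero section commutes with $\tau_*$, giving $i_0^{\,*}\{\tau_* \mathbf T\} = \delta^{\,*}\{\mathbf T\}$. Next, each $A_\lambda$ fixes the zero section pointwise, so $A_\lambda \circ i_0 = i_0$; because $(A_\lambda)_*$ acts on the cohomology of $\overline E$ as $(A_{1/\lambda})^{*}$, one gets $i_0^{\,*}(A_\lambda)_* = (A_{1/\lambda} \circ i_0)^{*} = i_0^{\,*}$. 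Combining the two observations yields $i_0^{\,*}\{T_\lambda\} = \delta^{\,*}\{\mathbf T\}$ for every $\lambda$, the class being read off in a neighborhood of the zero section so that the trivial extension across the section at infinity of $\overline E$ plays no role.

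It then remains to pass to the limit along a sequence $\lambda_n \to \infty$ with $T_{\lambda_n} \to S$, for which $\kappa(T_1,T_2) = \{S\}$, and to conclude $i_0^{\,*}\{S\} = \delta^{\,*}\{\mathbf T\}$. This continuity is the hard part: I must show that restriction to the zero section is continuous along the relatively compact family and that extension by zero across the section at infinity does not contribute to $i_0^{\,*}$. The plan is to realize $i_0^{\,*}$ by pairing against a fixed finite family of closed test forms supported near the zero section and representing a basis dual to $H^{*}(\Delta)$; since the currents $T_\lambda$ have uniformly bounded mass near the zero section by the relative compactness in Theorem \ref{thm:existence-tangent-currents}, these pairings are continuous under weak convergence of positive closed currents and therefore pass to the limit. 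This gives $i_0^{\,*}\kappa(T_1,T_2) = \{T_1\} \smallsmile \{T_2\}$ and hence the proposition. The hypothesis $p_1 + p_2 \le n$ enters precisely to ensure that the bidegree $p_1 + p_2$ of $\mathbf T$ does not exceed the codimension $n$ of the diagonal, which is the range in which the total density class encodes the intersection and the identity above holds.
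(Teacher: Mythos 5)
The paper does not prove this proposition at all: it is quoted as a special case of the intersection theory of Section 5 of \cite{dinh-sibony:density}, so your attempt has to be judged against that reference rather than against an argument in the text. What you write is a correct reconstruction of the relevant compatibility statement there: the restriction $i_0^*\kappa(T_1,T_2)$ to the zero section (equivalently, the coefficient $\kappa_{n-p}$ of $h^0$ in the Leray--Hirsch decomposition, since the tautological class restricts trivially to the zero section) equals $\delta^*\{\mathbf T\}=\{T_1\}\smallsmile\{T_2\}$, and the vanishing of the total class forces the vanishing of this component. The two places that need the most care are exactly the ones you flag, and both can be closed. First, $\tau_*\mathbf T$ and the $T_\lambda$ are defined only near the zero section, so their ``classes'' must be read in $H^{2p}(U)\simeq H^{2p}(\Delta)$ for a tubular neighborhood $U$; this is legitimate because the de Rham class of a closed current on $U$ is detected by pairing against closed compactly supported forms of degree $4n-2p$, and Poincar\'e duality identifies $H^{4n-2p}_c(U)$ with the dual of $H^{2p}(\Delta)$. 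Second, the $\lambda$-invariance $i_0^*\{T_\lambda\}=\delta^*\{\mathbf T\}$ reduces to $[A_\lambda^*\phi]=[\phi]$ in $H^{4n-2p}_c(U)$ for such test forms, which follows from the Thom isomorphism: integration over the fibers is unchanged by the fiberwise dilation $A_\lambda$. Once this is in place, passing to the limit is just weak convergence of $T_{\lambda_n}$ tested against finitely many fixed forms supported in $U$, as you say. One small correction: the hypothesis $p_1+p_2\le n$ is not what makes your identity true --- for $p_1+p_2>n$ both sides vanish for degree reasons --- it only guarantees that the target group is nonzero so the statement has content.
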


\section{Proof of the main theorem}  \label{sec:proof-main-thm}
This section is devoted to the proof of Theorem \ref{thm:main-thm}. By Theorem \ref{thm:local-comp} we will be able to deduce the global result from a computation using local coordinates, so we begin by considering the local case.

\subsection{Local result} \label{sec:local-problem}
 Let $\mathcal L$ be a Lipschitz lamination of dimension $q$ embedded on a compact Kähler manifold $X$ of dimension $n$.  From Lemma \ref{lemma:lamination-hol-coord} we can choose holomorphic coordinates $x = (x',x'')$ in a flow box $B \simeq \D^q \times \D^{n-q}$ in which the plaques of $\mathcal L$ in $B$ are given by $$ L_a = \{x'' = h_a(x')\},$$ for some uniformly bounded holomorphic functions  $h_a: \D^q \to \D^{n-q}$ depending continuously on $a = h_a(0)$ and $h_0 \equiv 0$.

Let $T$ be a diffuse foliated cycle of dimension $q$ on $X$ directed by $\mathcal L$. In the flow box $B$, $T$ is given by $$ T = \int [L_a] \, \diff \mu(a),$$ where $[L_a]$ denotes the current of integration along $L_a$ and $\mu$ is a diffuse positive measure on $\D^{n-q}$.

Let $\mathcal L \times \mathcal L$ denote the self-product lamination on $X \times X$. The coordinates on the flow box $B$ induce natural holomorphic coordinates $(x,y)$ on $B \times B$ in which the plaques of $\mathcal L \times \mathcal L$ are given by
 $$ L_{a,b} := L_a \times L_b = \{x'' = h_a(x'), \, y'' = h_a(y')\}.$$
 
 The product $ \mathbf T := T \otimes T$ is a foliated cycle of dimension $2q$ on $X \times X$ directed by $\mathcal L \times \mathcal L$ which is given,   on $B \times B$, by
 $$  \mathbf T = \int [L_{a,b}] \, \diff \mathfrak M (a,b), \text{ where } \mathfrak M = \mu \otimes \mu.$$

We will need the following simple result. It can be proven, for instance, using Fubini's Theorem.
\begin{lemma} \label{lemma:mass-diagonal}
Let $\mu$ be a finite measure on a separable Borel space $\mathcal X$. If $\mu$ has no atoms then $\mathfrak M = \mu \otimes \mu$ gives no mass to the diagonal $\Delta \subset \mathcal X \times \mathcal X$.
\end{lemma}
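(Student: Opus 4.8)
The plan is to compute the $\mathfrak M$-mass of the diagonal directly via Tonelli's theorem, thereby reducing it to an integral of point masses that vanish by the atomless hypothesis.

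The first step — and the only one that is not completely routine — is to verify that $\Delta$ is measurable for the product $\sigma$-algebra $\mathcal B(\mathcal X) \otimes \mathcal B(\mathcal X)$, so that Tonelli may be applied. This is precisely where separability is needed. Since $\mathcal X$ is separable its topology is second countable, and for second countable spaces the Borel $\sigma$-algebra of the product coincides with the product of the Borel $\sigma$-algebras; as $\Delta$ is closed it is therefore measurable for the product structure. Explicitly, if $\{O_k\}$ is a countable basis then the complement of $\Delta$ is the countable union $\bigcup O_j \times O_k$ taken over all pairs with $O_j \cap O_k = \varnothing$, which exhibits $\Delta$ as a Borel set of $\mathcal X \times \mathcal X$ lying in the product $\sigma$-algebra.

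With measurability established, I would apply Tonelli to the indicator $\mathbf 1_\Delta \geq 0$ and use that the $x$-slice of $\Delta$ is the singleton $\{x\}$, obtaining
$$ \mathfrak M(\Delta) = \int_{\mathcal X} \left( \int_{\mathcal X} \mathbf 1_\Delta(x,y) \, \diff \mu(y) \right) \diff \mu(x) = \int_{\mathcal X} \mu(\{x\}) \, \diff \mu(x). $$
Finally, since $\mu$ is atomless we have $\mu(\{x\}) = 0$ for every $x \in \mathcal X$, so the integrand is identically zero and hence $\mathfrak M(\Delta) = 0$, as claimed. The main — and essentially only — obstacle is the measurability of $\Delta$, which is the sole reason the separability hypothesis appears in the statement; once that is settled, everything else is an immediate application of Tonelli.
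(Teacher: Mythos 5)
Your proof is correct and follows exactly the route the paper indicates (the paper only remarks that the lemma "can be proven, for instance, using Fubini's Theorem" without giving details): Tonelli applied to $\mathbf 1_\Delta$ reduces the mass of the diagonal to $\int \mu(\{x\})\,\diff\mu(x)$, which vanishes since $\mu$ is atomless. Your additional care about the product-measurability of $\Delta$ via second countability is exactly the point where the separability hypothesis enters, and it is handled correctly.
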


Since we are  interested in computing the tangent currents of $\mathbf T$ along the diagonal $\Delta = \{x=y\} $ it is convenient to work in new holomorphic coordinates given by $$(z,w) = (x, y - x)$$ and new parameters given by $\alpha = (\alpha_1,\alpha_2)$,  where $\alpha_1 = a$ and $\alpha_2 = b-a$.

Write $z = (z',z'')$ with $z' \in \C^q$ and $z'' \in \C^{n-q}$ and similarly $w = (w',w'')$. In this new coordinate system the diagonal is given by $\Delta = \{w = 0\}$ and $L_{a,b}$ transforms to 
$$\Gamma_\alpha = \left \{( z ',f_\alpha( z', w'), w', g_\alpha(z',w')) \right \},$$
where $f_\alpha(z',w') = h_{\alpha_1}(z')$ and  $g_\alpha(z',w') = h_{\alpha_1 + \alpha_2}(z'+w') - h_{\alpha_1}(z'))$. Notice that $f_\alpha(0,w') = \alpha_1$ and $g_\alpha(0,0) = \alpha_2$.

The expression of $\mathbf T$ becomes  $$  \mathbf T = \int [\Gamma_\alpha] \, \diff \mathfrak M (\alpha).$$

We will need the following properties of $f_\alpha$, $g_\alpha$ and $\mathfrak M$:
\begin{enumerate}
\item The parameter $\alpha \in \D^{2n-2q}$ is determined by $\alpha = (f_\alpha(0,0),g_\alpha(0,0))$,
\item $(f_\alpha,g_\alpha)$ depends on $\alpha$ on a bi-Lipschitz way, \label{cond:lip}
\item $f_\alpha$ depends only on $\alpha_1$, \label{cond:indep-alpha2}
\item If $\alpha_2=0$ then $g_\alpha(z',0) \equiv 0$ and if $\alpha_2 \neq 0$ then $g_\alpha(z',0) \neq 0$ for every $z' \in \D^q$. \label{cond:geometric}
\item The measure $\mathfrak M$ gives no mass to the set $\{\alpha_2=0\}$, see Lemma \ref{lemma:mass-diagonal}. 

\end{enumerate}

\begin{remark} \label{rmk:lip-derivatives}
(a) Property (\ref{cond:lip}) means that  for every $0< \rho <1$ there is a constant $c = c(\rho) >0$  such that
\begin{equation} \label{eq:basic-lip}
c^{-1} \|\alpha- \beta \| \leq \max \{\|f_\alpha(z',w') - f_\beta(z',w')\|, \|g_\alpha(z',w') - g_\beta(z',w')\| \} \leq c \, \|\alpha - \beta\|,
\end{equation}
 for every $(z',w')$ such that $\|z'\|,\|w'\| \leq \rho$.
 
(b) From Cauchy formula, we have analogous upper bounds for the derivatives of $f_\alpha$ and $g_\alpha$ of any order.

(c) Property (\ref{cond:geometric}) corresponds to the geometric fact that the plaques $L_{a,b}$ with $a\neq b$ don't intersect the diagonal and the plaques $L_{a,a}$ intersect the diagonal along a $q$-dimensional manifold.

\end{remark}

Let $A_\lambda(z,w) = (z,\lambda w)$ be the dilation in the direction normal to $\Delta$ and
set $$\mathbf T_\lambda = (A_\lambda)_* \mathbf T, \,\, \text{ for } \,\,|\lambda| \geq 1.$$

The main result in this section is the following.

\begin{proposition} \label{prop:local-result}
Let $\mathcal L$, $\Gamma_\alpha$, $\mathbf T$ and $\mathbf T_\lambda$ be as above. Then $$\lim_{|\lambda| \to \infty} \mathbf T_\lambda = 0$$ in a neighborhood of the origin.
\end{proposition}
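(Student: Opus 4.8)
The plan is to reduce the statement to showing that the mass $\|\mathbf{T}_\lambda\|_{B(0,r)}$ on every ball $B(0,r)$ compactly contained in the flow box tends to $0$ as $|\lambda|\to\infty$. Since each $\mathbf{T}_\lambda$ is positive and closed, vanishing of the mass near the origin forces $\mathbf{T}_\lambda\to 0$ there (and the limits are controlled by Theorem \ref{thm:local-comp}). Using the laminar expression $\mathbf{T}=\int[\Gamma_\alpha]\,\diff\mathfrak{M}(\alpha)$ and $(A_\lambda)_*[\Gamma_\alpha]=[A_\lambda\Gamma_\alpha]$, I would write
\[
\|\mathbf{T}_\lambda\|_{B(0,r)} = \int \vol\bigl(A_\lambda\Gamma_\alpha\cap B(0,r)\bigr)\,\diff\mathfrak{M}(\alpha),
\]
so everything comes down to estimating the area of a single dilated plaque inside $B(0,r)$ and then integrating against $\mathfrak{M}$. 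I take $r$ small enough that all positions stay in the $\rho$-ball where \eqref{eq:basic-lip} holds.

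Parameterising $A_\lambda\Gamma_\alpha$ by its own base coordinates $(z',w')$, it is the holomorphic graph over $(z',w')$ with fibre $(z'',w'')=\bigl(h_{\alpha_1}(z'),\ \lambda g_\alpha(z',w'/\lambda)\bigr)$, where I used property (\ref{cond:indep-alpha2}) that $f_\alpha$ does not depend on $w'$. Hence its area inside $B(0,r)$ equals $\int \det(I+\tilde J_\alpha^\dagger\tilde J_\alpha)\,dV(z',w')$ over the bounded region $\{|z'|,|w'|\le r,\ |\lambda g_\alpha(z',w'/\lambda)|\le r\}$, where $\tilde J_\alpha$ is the holomorphic Jacobian of the fibre map. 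The only block of $\tilde J_\alpha$ that a priori grows with $\lambda$ is $\partial w''/\partial z' = \lambda\,\partial_{z'}g_\alpha(z',w'/\lambda)$. Now the contributing region is cut out by $|g_\alpha(z',w'/\lambda)|\le r/|\lambda|$, and since $g_{(\alpha_1,0)}\equiv 0$ on $\{w'=0\}$ (property (\ref{cond:geometric})) the lower bound in \eqref{eq:basic-lip} forces $|\alpha_2|\lesssim r/|\lambda|$ there; by the Cauchy estimates of Remark \ref{rmk:lip-derivatives}(b) this yields $\partial_{z'}g_\alpha=O(|\alpha_2|+1/|\lambda|)=O(1/|\lambda|)$, so the dangerous block is in fact $O(1)$. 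Consequently $\det(I+\tilde J_\alpha^\dagger\tilde J_\alpha)\le C$ with $C$ independent of $\lambda$ and $\alpha$ on the contributing region, and
\[
\vol\bigl(A_\lambda\Gamma_\alpha\cap B(0,r)\bigr)\le C\,\Leb\bigl\{(z',w'):|z'|,|w'|\le r,\ |g_\alpha(z',w'/\lambda)|\le r/|\lambda|\bigr\}.
\]

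Then I would integrate in $\alpha$ and apply Fubini to bound $\|\mathbf{T}_\lambda\|_{B(0,r)}$ by $C\int_{|z'|,|w'|\le r}\mathfrak{M}\{\alpha:|g_\alpha(z',w'/\lambda)|\le r/|\lambda|\}\,dV(z',w')$. For a fixed position, properties (\ref{cond:indep-alpha2}) and the lower bound in \eqref{eq:basic-lip} make $\alpha_2\mapsto g_\alpha(z',w'/\lambda)$ bi-Lipschitz (with $\alpha_1$ fixed), so the set $\{\alpha_2:|g_\alpha|\le r/|\lambda|\}$ has diameter $O(1/|\lambda|)$; writing $\alpha=(a,b-a)$ with $\mathfrak{M}=\mu\otimes\mu$, for each fixed $a$ this set lies in a ball of radius $O(1/|\lambda|)$ in the $b$ variable. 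Hence $\mathfrak{M}\{\cdots\}\le\|\mu\|\,\omega_\mu(Cr/|\lambda|)$, where $\omega_\mu(\delta):=\sup_x\mu(B(x,\delta))$. Because $\mu$ has no atoms and the transversal is compact, a standard argument gives $\omega_\mu(\delta)\to 0$ as $\delta\to 0$, and therefore $\|\mathbf{T}_\lambda\|_{B(0,r)}\lesssim r^{4q}\|\mu\|\,\omega_\mu(Cr/|\lambda|)\to 0$, which is the claim.

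The \textbf{main obstacle} is the volume estimate of the second paragraph. The Jacobian $\det(I+\tilde J_\alpha^\dagger\tilde J_\alpha)$ carries a factor of order $|\lambda|^{2q}$ coming from the dilation in the normal direction, so a crude bound on a single plaque blows up and cannot be compensated by the mere smallness of $\mathfrak{M}\{|\alpha_2|\lesssim r/|\lambda|\}$, for diffuseness gives no decay rate. The resolution is the observation that this factor is genuinely present only where the plaque has already escaped $B(0,r)$: on the slab that actually contributes to the mass, the transverse derivative $\partial_{z'}g_\alpha$ is itself of order $1/|\lambda|$ (by \eqref{eq:basic-lip} together with property (\ref{cond:geometric})), so the dilation factor cancels and the Jacobian stays bounded. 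Making this cancellation quantitative and uniform in $\alpha$, via \eqref{eq:basic-lip} and the Cauchy estimates of Remark \ref{rmk:lip-derivatives}, is the crux of the argument; once it is in place, the diffuseness of $\mu$ closes the proof.
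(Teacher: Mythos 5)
Your proof is correct and follows essentially the same route as the paper: the observation that a dilated plaque can contribute mass near the origin only when $\|\alpha_2\|\lesssim 1/|\lambda|$, the uniform volume bound for those plaques, and the vanishing of the transverse measure of $\{\|\alpha_2\|\lesssim 1/|\lambda|\}$ by diffuseness correspond exactly to the paper's Lemmas \ref{lemma:4}, \ref{lemma:3} and \ref{lemma:mass-diagonal}. The only cosmetic differences are that you bound the Jacobian of the dilated graph directly instead of invoking the boundedness of the graphs plus Lemma \ref{lemma:cauchy}, and that you quantify the final step with a modulus of continuity $\omega_\mu$ rather than the qualitative limit $\mathfrak M(\{\|\alpha_2\|\le |\lambda|^{-1}\})\to 0$.
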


The proof will follow from the following basic estimates.

 \begin{lemma} \label{lemma:1}
Fix $0< \rho <1$. There are  constants $c_1,c_2,c_3 > 0$ such that the following estimates hold for every  $(z',w')$ such that $\|z'\|,\|w'\| \leq \rho$
\begin{equation} \label{eq:main-ineq}
c_1 \cdot (\|\alpha_2\| - c_2 \, \|\alpha\| \, \|w'\|) \leq \| g_\alpha(z',w') \| \leq c_3 \cdot (\|\alpha_2 \| + \|\alpha\| \, \|w'\|). 
\end{equation}
 \end{lemma}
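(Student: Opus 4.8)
The plan is to split $g_\alpha(z',w')$ into a purely transverse part, governed by $\alpha_2$, and a purely horizontal part, governed by $w'$. Since $g_\alpha(z',w') = h_{\alpha_1+\alpha_2}(z'+w') - h_{\alpha_1}(z')$ and hence $g_\alpha(z',0) = h_{\alpha_1+\alpha_2}(z') - h_{\alpha_1}(z')$, I would write
$$
g_\alpha(z',w') = g_\alpha(z',0) + \bigl(h_{\alpha_1+\alpha_2}(z'+w') - h_{\alpha_1+\alpha_2}(z')\bigr).
$$
The goal is to show that the first summand is comparable to $\|\alpha_2\|$ and the second is $O(\|\alpha\|\,\|w'\|)$; then \eqref{eq:main-ineq} follows immediately from the triangle inequality (for the upper bound) and its reverse (for the lower bound), with $c_3 = \max\{c,C'\}$, $c_1 = c^{-1}$ and $c_2 = cC'$ for suitable constants $c,C'$.

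To control $g_\alpha(z',0)$ I would invoke the bi--Lipschitz property (\ref{cond:lip}), comparing $\alpha$ with $\beta = (\alpha_1,0)$. By (\ref{cond:indep-alpha2}) we have $f_\alpha = f_\beta$, so the maximum in \eqref{eq:basic-lip} is realized by the $g$--term, and since $\|\alpha-\beta\| = \|\alpha_2\|$ this gives $c^{-1}\|\alpha_2\| \leq \|g_\alpha(z',w') - g_{(\alpha_1,0)}(z',w')\| \leq c\,\|\alpha_2\|$ for all admissible $(z',w')$. Evaluating at $w'=0$ and using $g_{(\alpha_1,0)}(z',0)\equiv 0$ from (\ref{cond:geometric}), I obtain the two-sided estimate $c^{-1}\|\alpha_2\| \leq \|g_\alpha(z',0)\| \leq c\,\|\alpha_2\|$, which is exactly the pure $\alpha_2$--contribution needed for both directions.

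The heart of the argument, and the step I expect to be the main obstacle, is the bound $\|h_{\alpha_1+\alpha_2}(z'+w') - h_{\alpha_1+\alpha_2}(z')\| \leq C\,\|\alpha\|\,\|w'\|$, where it is essential that the factor $\|\alpha\|$ appears rather than a bare constant. First I would apply \eqref{eq:basic-lip} with $\beta = 0$ and use $h_0 \equiv 0$ to get the sup bound $\|h_a(z')\| \leq c\,\|a\|$ on a slightly larger polydisc $\{\|z'\| \leq \rho'\}$, $\rho<\rho'<1$; geometrically, plaques near the central one are uniformly small. Because each $h_a$ is holomorphic, Cauchy's estimates then \emph{upgrade} this to the derivative bound $\|D_{z'}h_a\| \leq C\,\|a\|$ on $\{\|z'\|\leq\rho\}$, making the derivative control of Remark \ref{rmk:lip-derivatives}(b) linear in $\|a\|$. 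Integrating $D_{z'}h_{\alpha_1+\alpha_2}$ along the segment from $z'$ to $z'+w'$ (which stays in the domain after shrinking $\rho$ so that $\|z'\|,\|w'\|\leq\rho$ forces $\|z'+w'\|\leq\rho'$) and using $\|\alpha_1+\alpha_2\| \leq 2\|\alpha\|$ yields the claim.

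The delicate point is precisely this linear-in-$\|\alpha\|$ derivative estimate: a naive application of Cauchy would only give $\|D_{z'}h_a\| = O(1)$ and hence a useless $O(\|w'\|)$ bound. It is the normalization $h_0 \equiv 0$ together with the \emph{uniform} Lipschitz dependence of the plaques on the parameter, encoded in (\ref{cond:lip}), that forces the slopes to decay like $\|a\|$. This decay expresses the geometric fact that plaques near the diagonal are asymptotically parallel, and it is ultimately what drives the vanishing of the tangent current in Proposition \ref{prop:local-result}. Once the two estimates are combined, the triangle inequality gives $\|g_\alpha(z',w')\| \leq c\|\alpha_2\| + C'\|\alpha\|\|w'\|$ and the reverse triangle inequality gives $\|g_\alpha(z',w')\| \geq c^{-1}\|\alpha_2\| - C'\|\alpha\|\|w'\|$, completing the proof of Lemma \ref{lemma:1}.
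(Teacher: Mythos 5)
Your proof is correct and follows essentially the same route as the paper: you isolate $g_\alpha(z',0)$, control it two-sidedly by $\|\alpha_2\|$ via the bi-Lipschitz property combined with $f_{(\alpha_1,\alpha_2)} = f_{(\alpha_1,0)}$ and $g_{(\alpha_1,0)}(\cdot,0)\equiv 0$, and bound the increment in $w'$ by $O(\|\alpha\|\,\|w'\|)$ using a Cauchy-upgraded, linear-in-$\|\alpha\|$ derivative estimate plus the mean value theorem --- exactly the paper's appeal to Remark \ref{rmk:lip-derivatives}(b). The only cosmetic difference is that you phrase the derivative bound through $D h_{\alpha_1+\alpha_2}$ rather than $Dg_\alpha$; these are the same estimate, and your identification of the linear decay of the slopes as the crux is exactly right.
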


\begin{proof}

From Property  (\ref{cond:geometric}) and the  Lipschitz condition (\ref{eq:basic-lip}) we get $$\|g_\alpha(z',0)\| = \|g_{\alpha_1,\alpha_2}(z',0) \| = \|g_{\alpha_1,\alpha_2}(z',0) - g_{\alpha_1,0}(z',0)\| \leq c \|\alpha_2\|.$$

From Remark \ref{rmk:lip-derivatives}--(b) we have that $\|Dg_\alpha (z',w')\| \leq k \|\alpha\|$ for $\|z'\|,\|w'\| \leq \rho$ and some constant $k>0$, so by the mean value theorem we get $$\|g_\alpha (z',w') - g_\alpha (z',0)\| \leq k \, \|\alpha\| \, \|w'\|.$$

Combining these two inequalities gives  $$\| g_\alpha(z',w') \| \leq  c_3(\|\alpha_2 \| + \|\alpha\| \, \|w'\|)$$ for some constant $c_3 > 0$.

To prove the left inequality in (\ref{eq:main-ineq}) we use Property (\ref{cond:indep-alpha2}) and the estimate (\ref{eq:basic-lip}). We get
\begin{equation*}
\begin{split}
\|g_\alpha(z',0)\| &= \|g_{\alpha_1,\alpha_2}(z',0) \| = \|g_{\alpha_1,\alpha_2}(z',0) - g_{\alpha_1,0}(z',0)\| \\
&= \max \left\{ \|g_{\alpha_1,\alpha_2}(z',0) - g_{\alpha_1,0}(z',0)\|, \|f_{\alpha_1,\alpha_2}(z',0) - f_{\alpha_1,0}(z',0)\|\right\} \\
&\geq c^{-1} \|\alpha_2\|.
\end{split}
\end{equation*}

Using again the fact that  $\|Dg_\alpha (z',w')\| \leq k \|\alpha\|$ for $\|z'\|,\|w'\| \leq \rho$ and the mean value theorem we get $$\| g_\alpha(z',w') \| \geq c_1 \cdot (\|\alpha_2 \| - c_2 \, \|\alpha\| \, \|w'\|),$$ for some constants $c_1,c_2 > 0$. This completes the proof of the Lemma.
\end{proof}

We will also need the following standard result, saying that the volume of a bounded family of graphs is uniformly bounded. We first recall some basic facts.

Let $M$ be a complex manifold equipped with a hermitian metric $\omega$. If $Y$ is a complex submanifold of dimension $m$ of $M$ then the metric $\omega$ induces a metric on $Y$. By Wirtinger's Theorem the corresponding volume form on $Y$ is $\frac{1}{m!} \omega^m$. This means that the $2m$-dimensonal volume of $Y$ over a Borel set $K$, denoted by $\vol_{2m}(A \cap K)$, is given by $\frac{1}{m!} \int_{A\cap K} \omega^m$. This quantity is equal to $\frac{1}{m!}$ times the mass of the trace measure $[A] \wedge \omega^m$ over $K$. 

\begin{lemma} \label{lemma:cauchy} Let $\Omega$ be a domain in $\C^m$, $\mathcal F$ be a family of holomorphic maps $f:\Omega \to \C^N$ and denote by $\Gamma_f \subset \C^{m+N}$ the graph of $f$. Suppose that there exists an $M > 0$ such that $\|f(z)\| \leq M$ for every $z \in \Omega$ and every $f \in \mathcal F$. Then for every compact $K \subset \Omega$ there is a constant $C>0$ such that $$ \vol_{2m}(\Gamma_f \cap (K \times \C^N)) \leq C\, \, \text{ for every } \, f \in \mathcal F,$$ where the volume is computed with respect to the standard Hermitian metric on $\C^{m+N}$.
\end{lemma}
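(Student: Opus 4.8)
The plan is to parametrize each graph by the projection onto the base $\Omega$ and to compute the induced volume form explicitly; this reduces the desired bound to a uniform estimate on the first derivatives of the maps $f$, which is exactly what Cauchy's inequalities provide.

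First I would fix the standard Kähler form $\beta = \frac{\smo}{2}\big(\sum_{k=1}^m \diff z_k \wedge \diff\overline{z}_k + \sum_{j=1}^N \diff w_j \wedge \diff\overline{w}_j\big)$ on $\C^{m+N}$, with coordinates $(z,w) \in \C^m \times \C^N$. Since $\Gamma_f$ is a complex submanifold of dimension $m$, Wirtinger's Theorem gives
\[
\vol_{2m}\big(\Gamma_f \cap (K \times \C^N)\big) = \frac{1}{m!}\int_{\Gamma_f \cap (K\times\C^N)} \beta^m = \frac{1}{m!}\int_K (\phi^*\beta)^m,
\]
where $\phi: \Omega \to \C^{m+N}$, $\phi(z) = (z,f(z))$, is the graph parametrization.

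Next I would compute $\phi^*\beta$. Because each $f_j$ is holomorphic, $\phi^*\diff w_j = \diff f_j = \sum_k \del_k f_j \, \diff z_k$, and therefore
\[
\phi^*\beta = \frac{\smo}{2}\sum_{k,l} H_{kl}\, \diff z_k \wedge \diff\overline{z}_l, \qquad H_{kl} = \delta_{kl} + \sum_{j=1}^N \del_k f_j\, \overline{\del_l f_j}.
\]
The matrix $H(z) = (H_{kl})$ is Hermitian and positive definite, and the standard identity for the top exterior power of a $(1,1)$-form yields $\frac{1}{m!}(\phi^*\beta)^m = \det H(z)\, \diff\Leb(z)$. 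Hence
\[
\vol_{2m}\big(\Gamma_f \cap (K\times\C^N)\big) = \int_K \det H(z)\, \diff\Leb(z),
\]
and it remains to bound $\det H$ uniformly over $f \in \mathcal F$ and $z \in K$.

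This is where the hypotheses enter. Since $K$ is compact and contained in the open set $\Omega$, the distance $\delta = \operatorname{dist}(K,\del\Omega)$ is positive, so every polydisc of polyradius $\delta/2$ centered at a point of $K$ is contained in $\Omega$. Applying Cauchy's inequalities on these polydiscs together with the uniform bound $\|f\| \leq M$ gives $|\del_k f_j(z)| \leq 2M/\delta$ for all $z \in K$, all indices $j,k$, and all $f \in \mathcal F$. Consequently the entries of $H$ are bounded on $K$ by a constant depending only on $m$, $N$, $M$ and $\delta$, whence $\det H \leq C'$ for such a constant $C'$; integrating over $K$ gives the claim with $C = C'\,\Leb(K)$. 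I expect the only point requiring care to be the pointwise identity $\frac{1}{m!}(\phi^*\beta)^m = \det H\, \diff\Leb$, obtained by diagonalizing the Hermitian form $H$ or by expanding the $m$-th power by multilinearity; the remaining steps are a direct use of Wirtinger's Theorem and of the uniform Cauchy estimate.
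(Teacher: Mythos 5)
Your argument is correct and follows essentially the same route as the paper: parametrize the graph, apply Wirtinger's Theorem to reduce the volume to $\frac{1}{m!}\int_K \phi^*\omega^m$, and bound the coefficients of this form uniformly via Cauchy's inequalities applied to the uniformly bounded family $\mathcal F$. The only difference is that you make the coefficient bound explicit through the Hermitian matrix $H$ and the identity $\frac{1}{m!}(\phi^*\beta)^m = \det H\,\diff\Leb$, where the paper simply notes that the coefficients of $\phi_f^*\omega^m$ are uniformly bounded on $K$.
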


\begin{proof}
Let $\omega = \sum_{i=1}^{m+N} \diff z_i \wedge \diff \overline z_i$ be standard Hermitian form on $\C^{m+N}$. For each $f \in \mathcal F$ consider the parametrization $\phi_f: \Omega \to \Gamma_f$, $\phi_f(x) = (x,f(x))$. From Wirtinger's Theorem we get that $$ \vol_{2m}(\Gamma_f \cap (K \times \C^N)) =  \frac{1}{m!} \int_{\Gamma_f \cap (K \times \C^N)}  \omega^m = \frac{1}{m!} \int_K \phi_f^* \omega^m.$$ Since the functions $f$ are uniformly bounded it follows from Cauchy's estimates that the first derivatives of $f$ are uniformly bounded over $K$ . This implies that the coefficients of the $(m,m)-$form $\phi_f^* \omega^m$ are bounded in $K$, uniformly in $f$. This gives the result.
\end{proof}

To prove Proposition \ref{prop:local-result} we will look at the action of the dilation $A_\lambda$ on each plaque $[\Gamma_\alpha]$  and consider the complementary cases when $\|\alpha_2 \| \leq \frac{1}{|\lambda|}$ and when $\|\alpha_2 \| > \frac{1}{|\lambda|}$.

 \begin{lemma} \label{lemma:3}
 The family of currents $\mathcal F = \{(A_\lambda)_*[\Gamma_\alpha] \, : \, |\lambda| \geq 1, \|\alpha_2 \| \leq \frac{1}{|\lambda|} \}$ is relatively compact. In other words the mass of  $(A_\lambda)_*[\Gamma_\alpha]$ on a compact set is bounded uniformly in $(\lambda,\alpha)$ when $\|\alpha_2 \| \leq \frac{1}{|\lambda|}$.
 \end{lemma}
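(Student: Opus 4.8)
The plan is to realize each pushed-forward plaque $(A_\lambda)_*[\Gamma_\alpha]$ as the current of integration along a holomorphic graph over a \emph{fixed} base polydisc and then to invoke Lemma \ref{lemma:cauchy}. Recall that for a family of positive (closed) currents, relative compactness is equivalent to a uniform bound on the mass over compact sets, so it suffices to produce such a bound. Since $A_\lambda(z,w)=(z,\lambda w)$ is a biholomorphism of the ambient space for $\lambda\neq 0$, we have $(A_\lambda)_*[\Gamma_\alpha]=[A_\lambda(\Gamma_\alpha)]$, the current of integration along the image graph.

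Parametrizing $\Gamma_\alpha$ by $(z',w')\mapsto(z',f_\alpha(z',w'),w',g_\alpha(z',w'))$ and applying $A_\lambda$, the image is parametrized by
$$(z',w')\longmapsto\big(z',\,f_\alpha(z',w'),\,\lambda w',\,\lambda g_\alpha(z',w')\big).$$
I would then perform the change of variable $\widetilde w'=\lambda w'$ in the base, which exhibits $A_\lambda(\Gamma_\alpha)$ as the graph of the holomorphic map
$$\Phi_{\lambda,\alpha}(z',\widetilde w')=\Big(h_{\alpha_1}(z'),\;\lambda\, g_\alpha\big(z',\tfrac{\widetilde w'}{\lambda}\big)\Big),$$
where I used Property (\ref{cond:indep-alpha2}), namely $f_\alpha(z',w')=h_{\alpha_1}(z')$, which depends neither on $w'$ nor on $\lambda$. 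Fix $0<\rho<1$ and set $\Omega=\{\|z'\|<\rho,\,\|\widetilde w'\|<\rho\}$; since $|\lambda|\geq 1$ the point $\widetilde w'/\lambda$ lies in $\D^q$, so $\Phi_{\lambda,\alpha}$ is well defined and holomorphic on $\Omega$, uniformly in the family.

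The heart of the argument is a uniform bound for $\Phi_{\lambda,\alpha}$ on $\Omega$. The first block $h_{\alpha_1}(z')$ is bounded by the uniform bound on the family $\{h_a\}$ and is independent of $\lambda$. For the second block I would apply the upper estimate of Lemma \ref{lemma:1}: writing $w'=\widetilde w'/\lambda$, so that $\|w'\|\leq\rho/|\lambda|$, and using the hypothesis $\|\alpha_2\|\leq 1/|\lambda|$, one gets
$$\big\|\lambda\, g_\alpha(z',w')\big\|=|\lambda|\,\|g_\alpha(z',w')\|\leq|\lambda|\,c_3\big(\|\alpha_2\|+\|\alpha\|\,\|w'\|\big)\leq c_3\big(1+\rho\,\|\alpha\|\big).$$
As $\alpha\in\D^{2n-2q}$, the norm $\|\alpha\|$ is bounded by a constant, so $\Phi_{\lambda,\alpha}$ is bounded on $\Omega$ uniformly in $(\lambda,\alpha)$ subject to $\|\alpha_2\|\leq 1/|\lambda|$. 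This is exactly the point where the dilation factor $|\lambda|$ is absorbed, and it is the main obstacle the estimate is designed to overcome: the two terms in Lemma \ref{lemma:1} each carry a compensating factor $1/|\lambda|$, one coming from the hypothesis on $\alpha_2$ and one from the rescaling of $w'$.

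Finally, I would apply Lemma \ref{lemma:cauchy} to the uniformly bounded family $\{\Phi_{\lambda,\alpha}\}$ on $\Omega$: for any compact $K\subset\Omega$ the $4q$-dimensional volume of the graph of $\Phi_{\lambda,\alpha}$ over $K$ is bounded by a constant independent of $(\lambda,\alpha)$. By Wirtinger's theorem this volume equals, up to a dimensional constant, the mass of $[A_\lambda(\Gamma_\alpha)]$ over the corresponding region. Since any compact subset of the target projects into such a base polydisc $\Omega$ (for $\rho$ sufficiently close to $1$), this produces the desired uniform mass bound, and hence the relative compactness of $\mathcal F$.
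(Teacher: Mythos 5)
Your proof is correct and follows essentially the same route as the paper: both rewrite $(A_\lambda)_*[\Gamma_\alpha]$ as a graph over the rescaled base variable $\widetilde w'=\lambda w'$, use Property (\ref{cond:indep-alpha2}) together with the upper bound of Lemma \ref{lemma:1} to absorb the factor $|\lambda|$ (one $1/|\lambda|$ coming from $\|\alpha_2\|\leq |\lambda|^{-1}$ and one from the rescaling of $w'$), and then invoke Lemma \ref{lemma:cauchy} to get the uniform local mass bound. No substantive differences to report.
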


\begin{proof}
We will consider only the pairs $(\lambda,\alpha)$ satisfying $\|\alpha_2 \| \leq |\lambda|^{-1}$. Recall that the para--meters $\alpha$ vary on a bounded set.  The support $\Gamma_\alpha^\lambda$ of $(A_\lambda)_*[\Gamma_\alpha]$ is a graph over the $(z',w')$ coordinates given by
\begin{equation} \label{eq:gamma-alpha-lambda}
\Gamma_\alpha^\lambda =  \left\{ ( z ',f_\alpha( z', \lambda^{-1}w'), w', \lambda g_\alpha(z',\lambda^{-1} w')) \right\}, \;\;\; \|z'\| < 1,\,\, \|w'\| < |\lambda|
\end{equation}
and the mass of $(A_\lambda)_*[\Gamma_\alpha]$ over a compact set $K$ is $(2q)!$ times the $4q$-dimensional volume of $\Gamma_\alpha^\lambda \cap K$.

We claim that, for a fixed $0<\rho<1$, the above graphs are uniformly bounded over the set $\|z'\|, \|w'\|  \leq \rho$. Indeed, the functions $f_\alpha( z', \lambda^{-1}w')$  are bounded uniformly in $\alpha$ and $\lambda$, and Lemma \ref{lemma:1} gives  $$\|\lambda g_\alpha(z',\lambda^{-1} w')\| \leq  |\lambda|\,c_3 \cdot (\|\alpha_2 \| + \|\alpha\| \, |\lambda|^{-1}\|w'\|) = c_3 \cdot (|\lambda| \, \|\alpha_2 \|  +  \|\alpha\| \, \|w'\|) \leq c'_3(1+ \rho)$$ for $\|z'\|,\|w'\| \leq \rho$, since we are assuming $\|\alpha_2 \| \leq |\lambda|^{-1}$.

It follows from Lemma \ref{lemma:cauchy} that the $2q$-dimensional volumes of the $\Gamma_\alpha^\lambda$ are locally boun--ded, uniformly in $(\lambda,\alpha)$ when $\|\alpha_2 \| \leq \frac{1}{|\lambda|}$.
\end{proof}

\begin{lemma} \label{lemma:4}
There exists a compact neighborhood of the origin $K$ such that $(A_\lambda)_*[\Gamma_\alpha]$ has no mass on $K$ for every pair $(\lambda,\alpha)$ such that $\|\alpha_2 \| > \frac{1}{|\lambda|}$. \end{lemma}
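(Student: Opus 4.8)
The plan is to show that the support $\Gamma_\alpha^\lambda$ described in (\ref{eq:gamma-alpha-lambda}) avoids a \emph{fixed} compact neighborhood of the origin as soon as $\|\alpha_2\| > |\lambda|^{-1}$; since $(A_\lambda)_*[\Gamma_\alpha]$ is the current of integration along $\Gamma_\alpha^\lambda$, this forces it to carry no mass on such a neighborhood. The geometric content is exactly Property (\ref{cond:geometric}): the plaques $L_{a,b}$ with $a\neq b$ (that is, $\alpha_2\neq 0$) stay at positive distance from the diagonal, and the dilation $A_\lambda$ amplifies this distance by the factor $|\lambda|$, so once $|\lambda|$ is large enough the rescaled plaque is pushed out of any fixed neighborhood of $\Delta$.

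To make this quantitative I would examine the last coordinate $w'' = \lambda\, g_\alpha(z',\lambda^{-1}w')$ of a point of $\Gamma_\alpha^\lambda$ and bound its norm from below using Lemma \ref{lemma:1}. Fix $0<\rho<1$ and let $M_\alpha$ be a bound for $\|\alpha\|$ on the (bounded) parameter set. Take $K=\{(z,w):\|z\|\le r,\ \|w\|\le r\}$ with $r\le\rho$, so that on $K$ one has $\|z'\|\le\rho$ and $\|\lambda^{-1}w'\|\le\rho$ (here $|\lambda|\ge 1$ enters), and Lemma \ref{lemma:1} applies with second argument $\lambda^{-1}w'$. The left inequality in (\ref{eq:main-ineq}) then gives
$$\|w''\| = |\lambda|\,\|g_\alpha(z',\lambda^{-1}w')\| \ge c_1\bigl(|\lambda|\,\|\alpha_2\| - c_2\,\|\alpha\|\,\|w'\|\bigr).$$
Now I would invoke the hypothesis $\|\alpha_2\|>|\lambda|^{-1}$, which yields $|\lambda|\,\|\alpha_2\|>1$, so the right-hand side is at least $c_1(1 - c_2 M_\alpha\,\|w'\|)$. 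Shrinking $r$ so that moreover $r\le (2c_2 M_\alpha)^{-1}$ forces $c_2 M_\alpha\|w'\|\le 1/2$ on $K$, whence $\|w''\|\ge c_1/2$. On the other hand every point of $K$ satisfies $\|w''\|\le\|w\|\le r$. Choosing finally $r< c_1/2$ (compatibly with the two previous constraints) produces a contradiction, so $\Gamma_\alpha^\lambda\cap K=\varnothing$ for all $(\lambda,\alpha)$ with $\|\alpha_2\|>|\lambda|^{-1}$, and $(A_\lambda)_*[\Gamma_\alpha]$ has no mass on $K$.

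The argument is essentially an exercise in tracking the rescaling, so I do not anticipate a genuine obstacle. The one point that requires care is to check that Lemma \ref{lemma:1} is legitimately applied at the rescaled argument $\lambda^{-1}w'$ — this is precisely where the standing assumption $|\lambda|\ge 1$ together with the choice $r\le\rho$ is used — and to fix the three upper bounds on $r$ in a mutually consistent way, so that a single compact neighborhood $K$ works uniformly in the pair $(\lambda,\alpha)$.
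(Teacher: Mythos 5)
Your argument is correct and is essentially the same as the paper's: both apply the left inequality of Lemma \ref{lemma:1} at the rescaled argument $\lambda^{-1}w'$, use $|\lambda|\,\|\alpha_2\|>1$ to obtain the lower bound $\|w''\|\ge c_1(1-c_2\|\alpha\|\,\|w'\|)$, and then shrink the neighborhood in the $w'$ and $w''$ directions so the graph $\Gamma_\alpha^\lambda$ misses it. The only cosmetic difference is your choice of a ball for $K$ versus the paper's product set, and your explicit remark that $|\lambda|\ge 1$ keeps $\lambda^{-1}w'$ in the range where Lemma \ref{lemma:1} applies.
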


\begin{proof}
The left inequality in (\ref{eq:main-ineq}) together with the assumption that $\|\alpha_2 \| > |\lambda|^{-1}$ gives $$\|\lambda g_\alpha(z',\lambda^{-1} w')\| \geq c_1 \, |\lambda| \, \cdot (\|\alpha_2 \| - c_2 \, |\lambda|^{-1} \, \|\alpha\|\, \|w'\|) \geq c_1 - k \|w'\|,$$ for some constant $k>0$.

Hence, if $\| w' \| \leq \frac{c_1}{3k}$ then $\|\lambda g_\alpha(z',\lambda^{-1} w')\| > \frac{c_1}{2}$, meaning that  the support $\Gamma_\alpha^\lambda$ of $(A_\lambda)_*[\Gamma_\alpha]$  (see \ (\ref{eq:gamma-alpha-lambda})) lies outside the compact set $$K= \left \{\| z' \| \leq \frac{1}{2}, \| w' \| \leq \frac{c_1}{3k}, \| z'' \| \leq \frac{1}{2}, \| w'' \| \leq \frac{c_1}{2} \right \}.$$ The lemma follows. 
\end{proof}

\begin{proof}[Proof of Proposition \ref{prop:local-result}]
Dividing the integral defining $\mathbf T_\lambda$ in two parts we get
 \begin{equation*}
    \begin{split}
      \mathbf T_\lambda = \int (A_\lambda)_*[\Gamma_\alpha] \, \diff \mathfrak M (\alpha)  &= \int_{\left \{ \|\alpha_2 \| \leq \frac{1}{|\lambda|} \right \}} (A_\lambda)_*[\Gamma_\alpha] \, \diff \mathfrak M (\alpha) + \int_{\left \{  \|\alpha_2 \| > \frac{1}{|\lambda|} \right \}} (A_\lambda)_*[\Gamma_\alpha] \, \diff \mathfrak M (\alpha) \\
      &= \mathbf T'_\lambda + \mathbf T''_{\lambda}.
    \end{split}
 \end{equation*}
 
Take a compact neighborhood of the origin $K$ as in Lemma \ref{lemma:4}, so that $\mathbf T''_\lambda$ is zero on $K$ for every $\lambda$. It follows from Lemma \ref{lemma:3} that there is a constant $M > 0$, independent of $\lambda$, such that the mass of $\mathbf T'_\lambda$ over $K$ is bounded by $M$ times $\mathfrak M(\{\|\alpha_2 \| < |\lambda|^{-1}\})$. The fact that $\mathfrak M$ gives no mass to the set $\{\alpha_2 = 0\}$ shows that $\mathbf T'_\lambda \to 0$ on $K$ as $|\lambda| \to \infty$. The proposition follows.
\end{proof}

\subsection{Global result} \label{sec:global-result}

The local computations of the last section allow us to prove our main theorem. 

\begin{proof}[Proof of Theorem \ref{thm:main-thm}] Notice first that if $q < \frac{n}{2}$ then $2n-2q > n$, so $H^{2n-2q,2n-2q}(X) = 0$ for dimensional reasons. Therefore the product $\{T\} \smallsmile \{T\}$ is trivially zero in this case. We may assume then that $q \geq  \frac{n}{2}$. According to Proposition \ref{prop:cup-product} it suffices to show that the total density class $\kappa(T,T)$ vanishes.

Let $\mathbf S$ be a tangent current to $\mathbf T$ along $\Delta$. If $U$ is a small open set  intersecting the support of $\mathbf T$  and contained in a flow box $B \times B \subset X \times X$ we can choose coordinates $(z,w)$ in $U$ as in Section \ref{sec:local-problem}, so that $\Delta$ is given by $\{w=0\}$. From Theorem \ref{thm:local-comp} we can compute the restriction of $\mathbf S$ to $U$ as the limit $$\mathbf S = \lim_{\lambda_n \to \infty} \mathbf T_{\lambda_n},$$ where $A_\lambda (z,w) = (z, \lambda w)$, $ \mathbf T_\lambda = (A_\lambda)_* \mathbf T$ and $\lambda_n \to \infty$.

From Proposition \ref{prop:local-result} we have that $\lim_{|\lambda| \to \infty} \mathbf T_\lambda = 0$, so any tangent current to $\mathbf T$ along $\Delta$ vanishes on $U$. Since $U$ is arbitrary we conclude that the zero current is the only tangent current to $\mathbf T$ along $\Delta$. Therefore the tangent class $\kappa^\Delta(\mathbf T) = \kappa(T,T)$ is zero,  concluding the proof.
\end{proof}

\begin{remark} \label{rmk:T^T}
For $q \geq \frac{n}{2}$, the vanishing of the class $\kappa(T,T)$ implies that $T \curlywedge T = 0$, where the wedge product is in the sense of Dinh--Sibony (see \cite{dinh-sibony:density}, Section 5).
\end{remark}

\section{Applications} \label{sec:applications}

We give in this section some consequences of Theorem \ref{thm:main-thm}.

\subsection{Foliated cycles on projective spaces and other manifolds}

Recall that for a compact Kähler manifold $X$, the Hodge Decomposition Theorem says that the de Rham cohomology groups of $X$ decompose as $H^k(X,\C) = \bigoplus_{p+q=k} H^{p,q}(X)$, where $H^{p,q}(X)$ is the set of classes that can be represented by a closed $(p,q)-$form (see \cite{voisin:book1}). The numbers $h^{p,q}(X) = \dim H^{p,q}(X)$ are called the \textit{Hodge numbers} of $X$. For $1 \leq p \leq \dim X$ the space $H^{p,p}(X)$ is never zero  since it contains the class of the $p-$th power of a Kähler form.

\begin{theorem}
Let $X$ be a compact Kähler manifold of dimension $n$ and let $\mathcal L$ be an embedded Lipschitz lamination on $X$ of dimension $q \geq n/2$. Suppose that  $h^{n-q,n-q}(X) = 1$. Then there is no diffuse foliated cycle directed by $\mathcal L$.
\end{theorem}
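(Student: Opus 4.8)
The plan is to argue by contradiction, combining Theorem \ref{thm:main-thm} with the hypothesis $h^{n-q,n-q}(X) = 1$. Suppose $T$ is a nonzero diffuse foliated cycle of dimension $q$ directed by $\mathcal L$, and fix a Kähler form $\omega$ on $X$. Since $T$ is a positive closed current of bi-dimension $(q,q)$, its class $\{T\}$ lies in $H^{n-q,n-q}(X)$. As recalled just before the statement, $\{\omega^{n-q}\}$ is a nonzero element of this space, so the assumption that $h^{n-q,n-q}(X)=1$ forces $H^{n-q,n-q}(X) = \C \cdot \{\omega^{n-q}\}$. I would therefore write $\{T\} = c\,\{\omega^{n-q}\}$ for some constant $c \in \C$.

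The next step is to pin down the sign of $c$ by a mass computation. Because $T$ is a nonzero positive closed current and $\omega^q$ is a strictly positive $(q,q)$-form, the integral $\int_X T \wedge \omega^q$ is a positive multiple of the mass of $T$ and hence strictly positive. On the other hand this integral depends only on the cohomology class of $T$, and substituting $\{T\} = c\,\{\omega^{n-q}\}$ gives $\int_X T \wedge \omega^q = c \int_X \omega^n$. Since $\int_X \omega^n > 0$, I conclude that $c > 0$.

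Finally I would invoke Theorem \ref{thm:main-thm}: as $T$ is a diffuse foliated cycle directed by a transversally Lipschitz lamination on the compact Kähler manifold $X$, we have $\{T\} \smallsmile \{T\} = 0$ in $H^{2n-2q,2n-2q}(X)$. Expanding with $\{T\} = c\,\{\omega^{n-q}\}$ yields
$$0 = \{T\} \smallsmile \{T\} = c^2 \, \{\omega^{2(n-q)}\}.$$
This is where the low-codimension hypothesis $q \geq n/2$ is used in an essential way: it gives $2q - n \geq 0$, so that the class $\{\omega^{2(n-q)}\}$ pairs with $\{\omega^{2q-n}\}$ to produce $\int_X \omega^n > 0$, and is therefore nonzero. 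Since $c > 0$, the product $c^2\,\{\omega^{2(n-q)}\}$ cannot vanish, a contradiction. Hence no nonzero diffuse foliated cycle directed by $\mathcal L$ can exist.

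The argument is essentially formal once Theorem \ref{thm:main-thm} is available, and I do not expect a genuine obstacle. The only point deserving care is the nonvanishing of $\{\omega^{2(n-q)}\}$, which is precisely what the assumption $q \geq n/2$ secures: for $q < n/2$ the self-intersection would land in a trivial cohomology group and would carry no information, so the hypothesis is not merely technical but exactly the range in which the vanishing of the self-intersection becomes a rigidity statement.
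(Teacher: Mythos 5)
Your proposal is correct and follows the same argument as the paper: write $\{T\} = c\{\omega^{n-q}\}$ using $h^{n-q,n-q}(X)=1$, note $c>0$, and contradict Theorem \ref{thm:main-thm} via $c^2\{\omega^{2n-2q}\}\neq 0$. You merely spell out two details the paper leaves implicit (the positivity of $c$ via the mass pairing, and the nonvanishing of $\{\omega^{2n-2q}\}$ from $q\geq n/2$), which is a welcome clarification but not a different route.
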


\begin{proof}
Fix a Kähler form $\omega$ in $X$. Let $T$ be a diffused foliated cycle of dimension $q$ directed by $\mathcal L$. From Theorem \ref{thm:main-thm} we have that $\{T\}  \smallsmile \{T\} = 0$. On the other hand, since $h^{n-q,n-q}(X) = 1$ the class $\{T\} \in H^{n-q,n-q}(X)$ equals to  $c\{\omega^{n-q}\}$ for some $c>0$. In particular $\{T\}  \smallsmile \{T\} = c^2 \{\omega^{2n-2q}\} \neq 0$, a contradiction.
\end{proof}

\begin{corollary}
Let $X$ be a compact Kähler manifold with $b_2(X) = 1$. Then there is no diffuse foliated cycle on $X$ directed by a Lipschitz lamination of codimension $1$.
\end{corollary}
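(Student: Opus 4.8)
The plan is to reduce this statement to the theorem immediately preceding it, by translating the topological hypothesis $b_2(X) = 1$ into the Hodge-theoretic hypothesis $h^{n-q,n-q}(X) = 1$ required there. A lamination of codimension $1$ has dimension $q = n-1$, so $n-q = 1$ and the relevant Hodge number is $h^{1,1}(X)$. Thus the entire argument rests on the single observation that $b_2(X) = 1$ forces $h^{1,1}(X) = 1$.

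First I would dispose of the dimension constraint. To invoke the preceding theorem I need $q \geq n/2$, i.e.\ $n-1 \geq n/2$, which holds precisely when $n \geq 2$. The case $n = 1$ is degenerate: a codimension $1$ lamination of a curve is $0$-dimensional, and a diffuse (atomless) foliated cycle of dimension $0$ cannot exist, so the statement is vacuous there. Hence I may assume $n \geq 2$, in which case $q = n-1 \geq n/2$ is automatic.

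The only substantive step is elementary Hodge theory. By the Hodge Decomposition Theorem recalled above, $H^2(X,\C) = H^{2,0}(X) \oplus H^{1,1}(X) \oplus H^{0,2}(X)$, and complex conjugation gives $h^{2,0}(X) = h^{0,2}(X)$, so $b_2(X) = 2\,h^{2,0}(X) + h^{1,1}(X)$. Since $H^{1,1}(X)$ always contains the class of a K\"ahler form, one has $h^{1,1}(X) \geq 1$. Therefore $b_2(X) = 1$ together with $2\,h^{2,0}(X) \geq 0$ forces $h^{2,0}(X) = 0$ and $h^{1,1}(X) = 1$.

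Finally, with $h^{n-q,n-q}(X) = h^{1,1}(X) = 1$ established and $q = n-1 \geq n/2$, I would simply apply the preceding theorem to conclude that $\mathcal L$ carries no diffuse foliated cycle. I do not expect any serious obstacle: this is genuinely a corollary, and the only content is the linear-algebra remark that the nonnegativity of $h^{2,0}(X)$ combined with $h^{1,1}(X) \geq 1$ collapses the Hodge decomposition onto a one-dimensional $H^{1,1}(X)$ as soon as $b_2(X) = 1$.
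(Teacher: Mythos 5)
Your proof is correct and is exactly the argument the paper intends: the corollary is an immediate application of the preceding theorem, the only content being that $b_2(X) = 2h^{2,0}(X) + h^{1,1}(X)$ together with $h^{1,1}(X) \geq 1$ forces $h^{1,1}(X) = 1$, while $q = n-1 \geq n/2$ holds for $n \geq 2$. (Your aside that a diffuse foliated cycle of dimension $0$ cannot exist is not quite accurate---an atomless measure on a compact totally disconnected set would be one---but the $n=1$ case is degenerate and clearly outside the intended scope, so this does not affect the argument.)
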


From the fact that $h^{p,p}(\pr^n) = 1$ for every $p$ we get 
 
\begin{corollary} \label{cor:foliated-cycles-Pn}
Let $\mathcal L$ be an embedded Lipschitz lamination of dimension $q \geq n/2$ on $\pr^n$. Then there is no diffuse foliated cycle directed by $\mathcal L$.
In particular, if $\mathcal L$ has no compact leaf then there is no foliated cycle directed by $\mathcal L$.
\end{corollary}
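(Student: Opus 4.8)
The plan is to read this statement off from the preceding Theorem, whose only hypothesis beyond $q \geq n/2$ is the numerical condition $h^{n-q,n-q}(X)=1$, and to check that this condition holds for $X = \pr^n$. First I would note that $\pr^n$, equipped with the Fubini--Study form $\omega$, is a compact K\"ahler manifold, so the preceding Theorem is applicable to any embedded Lipschitz lamination on it. The remaining point is the computation of the Hodge numbers: the cohomology ring $H^*(\pr^n,\C)$ is generated by the hyperplane class $\{\omega\}$, so that $H^{p,p}(\pr^n)$ is one--dimensional and spanned by $\{\omega^p\}$ for each $0 \leq p \leq n$, while the off--diagonal Hodge groups vanish. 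In particular $h^{n-q,n-q}(\pr^n)=1$, so the preceding Theorem applies and yields directly that there is no diffuse foliated cycle directed by $\mathcal L$.

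It remains to deduce the ``in particular'' clause. Here I would invoke Remark \ref{rmk:atoms-leaves}: if $\mathcal L$ has no compact leaf, then the transverse measure of any foliated cycle directed by $\mathcal L$ has no atoms in any flow box, i.e.\ every such cycle is automatically diffuse. Consequently, the existence of any foliated cycle directed by a leafless $\mathcal L$ would produce a diffuse one, contradicting the first assertion. Hence no foliated cycle exists in that case.

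The argument is formal once the preceding Theorem is available, so there is no genuine analytic obstacle at this stage; the entire weight of the result is borne by Theorem \ref{thm:main-thm} (the vanishing of the self--intersection class of a diffuse foliated cycle) combined with the rigidity of the cohomology of $\pr^n$. The latter is precisely what forces the class $\{T\}$ of a putative diffuse cycle to be a positive multiple of $\{\omega^{n-q}\}$, whose self--intersection $\{\omega^{n-q}\} \smallsmile \{\omega^{n-q}\} = \{\omega^{2n-2q}\}$ is nonzero because $2n-2q \leq n$ when $q \geq n/2$; the tension between this nonvanishing and Theorem \ref{thm:main-thm} is what produces the contradiction.
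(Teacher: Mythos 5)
Your proposal is correct and follows exactly the paper's route: the corollary is deduced from the preceding theorem via the fact that $h^{p,p}(\pr^n)=1$ for all $p$, with the ``in particular'' clause handled by Remark \ref{rmk:atoms-leaves}. Nothing is missing.
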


The above result is proven in \cite{fornaess-sibony:finite-energy} for $n=2$ and $q=1$. In their proof, a crucial ingredient is the fact that $\pr^2$ is homogeneous under the action of its automorphism group. This is used in order to regularize the foliated cycle. Our approach allows us to bypass this difficulty.

In what follows a Riemann surface is called parabolic if it is uniformized by the complex plane and hyperbolic if  it is uniformized by the the unit disc. The following corollary is consequence of the Ahflors' construction (Example \ref{ex:ahlfors}).

\begin{corollary} \label{cor:h^11=1}
Let $X$ be a Kähler surface with $h^{1,1}(X) = 1$. Let $\mathcal L$ be an embedded Lipschitz lamination by Riemann surfaces on $X$ without compact leaves. Then all the leaves of $\mathcal L$ are hyperbolic.
\end{corollary}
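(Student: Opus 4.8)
The plan is to argue by contradiction: from a non-hyperbolic leaf I would manufacture a diffuse foliated cycle and then appeal to the non-existence result for surfaces with $h^{1,1}(X)=1$. By the uniformization theorem every leaf of $\mathcal L$, being a Riemann surface, has universal cover $\pr^1$, $\C$, or $\D$. A leaf with cover $\pr^1$ is compact, and a non-compact leaf with cover $\C$ is biholomorphic to $\C$ or $\C^*$, the only remaining quotient being a torus, which is compact. Since $\mathcal L$ has no compact leaves by hypothesis, it suffices to rule out leaves uniformized by $\C$, that is, parabolic leaves in the sense of Example \ref{ex:ahlfors}.

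Suppose then that $\mathcal L$ admits a parabolic leaf $L$, with uniformizing (covering) map $\phi \colon \C \to L$. By the Ahlfors construction recalled in Example \ref{ex:ahlfors}, there is a sequence $r_n \to \infty$ with $\ell_{r_n}/a_{r_n} \to 0$ along which the normalized currents of integration on $\phi(\D_{r_n})$ converge to a nonzero positive closed current $T$ directed by $\mathcal L$, i.e.\ a foliated cycle. Because $\mathcal L$ has no compact leaves, Remark \ref{rmk:atoms-leaves} guarantees that the transverse measure of $T$ has no atoms in any flow box, so $T$ is automatically diffuse.

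Now $X$ is a K\"ahler surface, so $n = \dim X = 2$ and the lamination has dimension $q = 1 = n/2$; in particular $q \geq n/2$ and $h^{n-q,n-q}(X) = h^{1,1}(X) = 1$. These are precisely the hypotheses of the first theorem of this section (which itself packages Theorem \ref{thm:main-thm} with the Hodge-theoretic observation that $\{T\}$ is a positive multiple of the K\"ahler class and hence has nonzero self-intersection), and that theorem forbids the existence of a diffuse foliated cycle directed by $\mathcal L$. This contradicts the existence of $T$. Hence $\mathcal L$ has no parabolic leaf, and combined with the absence of compact leaves the uniformization trichotomy forces every leaf to be hyperbolic.

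The argument is essentially a chaining of the cited results, so there is no single deep step; the points that require care are (i) that the Ahlfors limit $T$ is genuinely nonzero, which holds since each approximating current has unit mass, and that it is \emph{strongly} directed by $\mathcal L$ rather than merely supported on it, as asserted in Example \ref{ex:ahlfors}; and (ii) that the construction applies equally when $L \cong \C^*$, where $\phi$ is a nontrivial covering rather than a biholomorphism. In that case one works with the pushforwards $\phi_*[\D_{r_n}]$ counted with multiplicity, and Ahlfors' lemma still supplies the required sequence $r_n$. The no-compact-leaves hypothesis enters only through step (i), via Remark \ref{rmk:atoms-leaves}, to ensure diffuseness.
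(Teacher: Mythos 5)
Your argument is correct and is exactly the one the paper intends: the corollary is stated as a consequence of the Ahlfors construction (Example \ref{ex:ahlfors}) combined with the preceding non-existence theorem for diffuse foliated cycles when $h^{n-q,n-q}(X)=1$, with diffuseness supplied by Remark \ref{rmk:atoms-leaves} and the no-compact-leaves hypothesis handling the $\pr^1$ and torus cases of the uniformization trichotomy. Your extra care about the $\C^*$ case and the nonvanishing of the Ahlfors limit is a welcome elaboration of details the paper leaves implicit.
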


\begin{corollary}
Let $X$ be a Kähler surface with $h^{1,1}(X) = 1$. Let  $\mathcal F$  be a (singular) holomorphic foliation of $X$  without compact leaves. If $\mathcal F$ admits a parabolic leaf $L$ then $\overline{L}$ intersects the singular set of $\mathcal F$.
\end{corollary}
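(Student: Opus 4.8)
The plan is to argue by contradiction using Corollary~\ref{cor:h^11=1}. Suppose $\mathcal F$ is a singular holomorphic foliation of $X$ without compact leaves and that $\mathcal F$ admits a parabolic leaf $L$ whose closure $\overline L$ avoids the singular set $\text{Sing}(\mathcal F)$. First I would observe that away from its singularities a holomorphic foliation is, in particular, a transversally holomorphic (hence transversally Lipschitz) lamination by Riemann surfaces. The hypothesis $\overline L \cap \text{Sing}(\mathcal F) = \varnothing$ is precisely what guarantees that $\overline L$ is contained in the smooth locus $X \setminus \text{Sing}(\mathcal F)$, so the restriction $\mathcal L := \mathcal F|_{\overline L}$ (more precisely, the lamination obtained by taking the saturated closure of $L$ inside the smooth locus) is an honest embedded transversally Lipschitz lamination by Riemann surfaces, with no singularities to worry about.

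Next I would produce a foliated cycle from the parabolic leaf $L$ via the Ahlfors construction of Example~\ref{ex:ahlfors}. Since $L$ is uniformized by $\C$ through a map $\phi:\C \to L$, Ahlfors' theorem yields a sequence $r_n \to \infty$ with $\ell_{r_n}/a_{r_n} \to 0$, so that the normalized currents of integration along $\phi(\D_{r_n})$ converge to a nonzero positive closed current $T$ directed by $\mathcal L$. The key point is that $T$ is a genuine \emph{foliated cycle} supported on $\overline L \subset X \setminus \text{Sing}(\mathcal F)$, and that its mass does not escape into the singular set precisely because $\overline L$ stays away from $\text{Sing}(\mathcal F)$; this is where the hypothesis is used a second time, to ensure the limit current is a well-defined foliated cycle on the (smooth) lamination $\mathcal L$ rather than a current picking up unwanted mass near singularities.

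Then I would apply Corollary~\ref{cor:h^11=1}. Since $X$ is a K\"ahler surface with $h^{1,1}(X)=1$ and $\mathcal L$ is an embedded Lipschitz lamination by Riemann surfaces without compact leaves (the absence of compact leaves is inherited from the assumption on $\mathcal F$), the corollary asserts that every leaf of $\mathcal L$ is hyperbolic. But $L$ is a leaf of $\mathcal L$ and $L$ is parabolic by assumption, which is the desired contradiction. Hence no such parabolic leaf with $\overline L$ disjoint from $\text{Sing}(\mathcal F)$ can exist, i.e.\ $\overline L$ must intersect $\text{Sing}(\mathcal F)$.

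The main obstacle I anticipate is the technical verification in the first two paragraphs, namely confirming that the closure $\overline L$ together with its limiting holonomy data genuinely forms an embedded transversally Lipschitz lamination on the smooth locus, and that the Ahlfors current is a bona fide \emph{diffuse} foliated cycle directed by it. One must check that the minimal set $\overline L$ has no compact leaves (so that, by Remark~\ref{rmk:atoms-leaves}, any foliated cycle directed by it is automatically diffuse) and that the transverse regularity of $\mathcal F$ off its singularities is indeed Lipschitz; for a holomorphic foliation this holomorphic transverse structure is even smoother than Lipschitz, so the regularity hypothesis of Corollary~\ref{cor:h^11=1} is comfortably satisfied. Once these structural points are settled, the logical skeleton --- Ahlfors current $\Rightarrow$ foliated cycle $\Rightarrow$ all leaves hyperbolic $\Rightarrow$ contradiction with parabolicity of $L$ --- closes immediately.
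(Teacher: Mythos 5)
Your proposal is correct and follows essentially the same route as the paper: assume $\overline L$ avoids $\text{Sing}(\mathcal F)$, note that $\mathcal F$ then restricts to a non-singular (in fact transversally holomorphic) embedded lamination near $\overline L$, apply the Ahlfors construction of Example \ref{ex:ahlfors} to the parabolic leaf $L$ to produce a directed foliated cycle, and contradict Corollary \ref{cor:h^11=1}. The extra verifications you flag (diffuseness via the no-compact-leaves hypothesis and Remark \ref{rmk:atoms-leaves}, and the transverse regularity) are exactly the points the paper leaves implicit, and you resolve them correctly.
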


\begin{proof}
Suppose that there is a parabolic leaf $L$ whose closure does not meet the singular set of $\mathcal F$. In this case, the foliation $\mathcal F$ restricts to a non-singular lamination $\mathcal L$ in a neighborhood of $\overline L$ and the Ahlfors' construction (Example \ref{ex:ahlfors}) gives a foliated cycle directed by $\mathcal L$, which is impossible in view of Corollary \ref{cor:h^11=1}.
\end{proof}

The above results hold in particular when $X$ is the complex projective plane. In this case, the condition of having no compact leaves is generic, see \cite{jouanolou:pfaff}.

\begin{remark}  \label{rmk:parabolicity} Some authors consider different notions of parabolicity and use them to produce $\diff-$closed and $\ddc$--closed currents in the same spirit as Ahlfors (see \cite{burns-sibony}, \cite{dethelin:ahlfors}  and \cite{paun-sibony}). We can also apply the above results to these types of parabolic leaves.
\end{remark}

\subsection{Foliated cycles on complex tori}

Let $X$ be a complex torus. A lamination on $X$ is called linear if it is given by a union of leaves of a linear foliation on $X$.

\begin{remark}
Let $\mathcal L$ be a lamination of dimension $q$ on a complex manifold $X$ that extends to a holomorphic foliation $\mathcal F$ on $X$. Then the foliated cycles directed by $\mathcal L$ are precisely the positive closed $(n-q,n-q)-$currents $T$ whose support is contained in the support of $\mathcal L$ and satisfy $T \wedge \gamma_i = 0$, where $\gamma_i$ are local holomorphic $(1,0)-$forms whose kernel defines $\mathcal F$.
\end{remark}

\begin{proposition}
Let $X$ be a complex torus and let $\mathcal L$  be a Lipschitz lamination of codimension one embedded on $X$. Then there is a foliated cycle directed by $\mathcal L$ if and only if $\mathcal L$ is linear.
\end{proposition}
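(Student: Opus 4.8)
The plan is to prove both implications, the substance lying in the forward direction (existence of a cycle $\Rightarrow$ linearity). Here $q=n-1$, so $q\ge n/2$ as soon as $n\ge 2$ (the case $n=1$ being degenerate), and Theorem \ref{thm:main-thm} applies. Throughout I fix a flat Kähler form $\omega$ on $X=\C^n/\Lambda$ and use two standard facts on a torus: every de Rham class has a unique translation-invariant (constant-coefficient) representative, and a constant form is exact only if it vanishes, so constant forms inject into $H^\bullet(X,\C)$. For the easy direction, suppose $\mathcal L$ is linear, cut out by a constant holomorphic $1$-form $\gamma$, so its leaves are translates of $W=\ker\gamma$. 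If the leaves are compact then integration along one of them is a foliated cycle; in general the transverse holonomy of a linear foliation is by translations, which preserve Lebesgue measure, so there is a holonomy-invariant transverse measure and hence a foliated cycle. Explicitly, when $\mathcal L$ fills $X$ the constant positive $(1,1)$-form $i\,\gamma\wedge\overline\gamma$ is closed, supported in $\mathcal L$, and satisfies $i\,\gamma\wedge\overline\gamma\wedge\gamma=0$, so by the remark preceding the statement it is a foliated cycle directed by $\mathcal L$.

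For the forward direction, let $T$ be a foliated cycle directed by $\mathcal L$; I first treat the diffuse case. By Theorem \ref{thm:main-thm}, $\{T\}\smallsmile\{T\}=0$ in $H^{2,2}(X)$. Averaging $T$ over the translations of $X$ produces a translation-invariant positive closed $(1,1)$-current, i.e.\ a constant positive semidefinite form $\eta$ with $\{\eta\}=\{T\}$. Since constant forms inject into cohomology, $\{T\}^2=0$ forces $\eta\wedge\eta=0$ as a form, which for a positive semidefinite Hermitian $(1,1)$-form means $\operatorname{rank}\eta\le 1$; hence $\eta=i\,\gamma\wedge\overline\gamma$ for some constant $(1,0)$-form $\gamma$. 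Cupping the vanishing class with $\{\omega\}^{n-2}$ gives $\int_X T\wedge\eta\wedge\omega^{n-2}=\{T\}^2\smallsmile\{\omega\}^{n-2}=0$; as the integrand is a positive measure it vanishes, and strict positivity of $\omega$ then yields $T\wedge i\,\gamma\wedge\overline\gamma=0$.

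It remains to read off the geometry. Writing $T=\int[L_a]\,\diff\mu(a)$ in a flow box, the identity $T\wedge i\,\gamma\wedge\overline\gamma=0$ becomes $\int \big(i\,\gamma\wedge\overline\gamma\big)|_{L_a}\,\diff\mu(a)=0$ with nonnegative integrand, so for $\mu$-almost every $a$ the restriction $\gamma|_{TL_a}$ vanishes; since $\dim_{\C}L_a=n-1=\dim_{\C}\ker\gamma$, the leaf $L_a$ is tangent to $W=\ker\gamma$. Because the lamination is transversally Lipschitz its leaves are continuously varying holomorphic graphs, so the tangent-plane field $p\mapsto T_p\mathcal L$ is continuous; the closed set on which it equals $W$ has full $\mu$-measure, hence contains $\operatorname{supp}T$. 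Replacing $\mathcal L$ by $\operatorname{supp}T$ (the sublamination actually carrying the cycle), every leaf is a translate of $W$, i.e.\ $\mathcal L$ is linear. Finally, if $T$ is not diffuse then by Remark \ref{rmk:atoms-leaves} its atoms are compact leaves, and a compact complex hypersurface in a torus is a translate of a subtorus, hence linear; since leaves of a lamination are pairwise disjoint and disjoint complex hypersurfaces have vanishing cohomological product, the conormal directions of all the (compact and diffuse) leaves must coincide, so the whole lamination is directed by a single $W$.

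The main obstacle I expect is this last passage from cohomology to honest geometry: turning the global vanishing $T\wedge i\,\gamma\wedge\overline\gamma=0$ into the leafwise tangency statement requires slicing the laminar current against $\mu$, which is where the transverse Lipschitz hypothesis is essential, and then the bookkeeping that forces a single common direction $W$ for the diffuse and atomic parts (together with the mild subtlety that one controls only leaves in $\operatorname{supp}T$, so the cleanest statement identifies $\mathcal L$ with the support of the cycle). The cohomological core—$\{T\}^2=0\Rightarrow\operatorname{rank}\eta\le 1\Rightarrow T\wedge i\,\gamma\wedge\overline\gamma=0$—is by contrast quite rigid and essentially forced.
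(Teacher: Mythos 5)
Your treatment of the diffuse case is essentially the paper's argument, fleshed out: the paper also derives $\{T\}^2=0$ from Theorem \ref{thm:main-thm}, uses the identification $H^\bullet(X,\C)\simeq\bigwedge^\bullet(\C^n)^*$ to write the class as $i\gamma\wedge\overline\gamma$, concludes $T\wedge i\gamma\wedge\overline\gamma=0$ from positivity, and then reads off the tangency (a step it leaves as ``not hard to show'' and which you correctly carry out by slicing against $\mu$ and using continuity of the tangent plane field). Your averaging argument for why a positive $(1,1)$-class of square zero on a torus is $i\gamma\wedge\overline\gamma$ is a proof of a fact the paper simply asserts. The existence direction is where you genuinely diverge: the paper invokes de Th\'elin's Ahlfors-type construction on a single parabolic leaf (an affine hyperplane), whereas you invoke holonomy-invariance of Lebesgue measure on a transversal. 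As stated your version has a gap: if $\mathcal L$ is a \emph{proper} closed union of leaves, its trace on a transversal can be Lebesgue-null, so Lebesgue measure does not restrict to a transverse measure carried by $\mathcal L$. This is repairable -- restrict to the closure of one leaf, which is a coset of a real subtorus, and use Haar measure in the transverse direction of that coset -- but you should say so; the ``$\mathcal L$ fills $X$'' case you treat explicitly does not cover the general situation.

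The genuine error is in your handling of the non-diffuse case: a compact complex hypersurface in a complex torus need \emph{not} be a translate of a subtorus (a smooth theta divisor in a principally polarized abelian variety, or any smooth ample divisor, is a counterexample), so the sentence disposing of the atomic part is false, and the subsequent claim that disjointness forces a common conormal direction collapses with it, since it relied on each compact leaf having a rank-one class. In fact this same example (take $\mathcal L$ to be a single smooth ample divisor, which is a perfectly good transversally Lipschitz lamination carrying the foliated cycle $[D]$) shows that the proposition as literally stated requires the cycle to be diffuse; the paper's own proof tacitly assumes this when it applies Theorem \ref{thm:main-thm}, and never addresses atoms at all. So you were right to flag the non-diffuse case as needing attention, but the correct resolution is to add the diffuseness hypothesis (or equivalently to exclude compact leaves), not to claim that compact leaves are automatically linear.
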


\begin{proof}
Suppose that $\mathcal L$ is linear. In particular, every leaf of $\mathcal L$ is the image of an affine hyperplane $H \simeq \C^{n-1}$ by the canonical projection $\pi: \C^n \to X$.  A result of de Thélin \cite{dethelin:ahlfors} allows us to construct a positive closed current $T$  of bi-dimension $(n-1,n-1)$ as a limit of normalized currents of integration along $\pi(\D_{r_n})$, where $\D_r$ is an $(n-1)-$dimensional disc of radius $r$ in $H$ and $r_n$ is a sequence tending to infinity. By construction, the current $T$ is a foliated cycle directed by $\mathcal L$.

Let us prove the converse. Recall that, using Hodge theory, we can see that the cohomology ring $H^\bullet(X,\C)$ is isomorphic to the exterior algebra $\bigwedge^\bullet (\C^n)^*$. Under this identification, a positive $(1,1)-$class $\mathbf c$ has vanishing square if and only if it has the from $\mathbf c = i \gamma \wedge \overline \gamma$ for some linear form $\gamma$ of type $(1,0)$.

Let $\mathcal L$ be a  Lipschitz lamination of codimension one on $X$ and let $T$ be a foliated cycle directed by $\mathcal L$. Denote by $\mathbf c$ the class of $T$. By Theorem \ref{thm:main-thm} we have $\mathbf c ^2 = 0$, so by the above remark $\mathbf c = i \gamma \wedge \overline \gamma$ for some $(1,0)$-vector $\gamma$. We claim that $T$ is directed by the linear foliation defined by $\gamma$. Indeed we have $$ 0 = \{T\}^2 = \{T\} \smallsmile i \gamma \wedge \overline \gamma = \{T \wedge i \gamma \wedge \overline \gamma\},$$ and since $T \wedge i \gamma \wedge \overline \gamma$ is a positive current, it must be zero. It is not hard to show from this fact that $T \wedge \gamma = 0$, so $T$ is directed by the linear foliation defined by $ \ker \gamma$.
\end{proof}

\subsection*{Foliated cycles on Hirzebruch surfaces}

We now apply Theorem \ref{thm:main-thm} to describe the foliation cycles on a Hirzebruch surface. Let us first recall some facts about currents on projective bundles. We refer to \cite{dinh-sibony:density}, Section $3$ for more details.

Let $V$ be a Kähler manifold of dimension $\ell$ with Kähler form $\omega_V$ and let $E$ be a rank $r$ holomorphic vector bundle over $V$. The projectivization of $E$, denoted by $\pr(E)$, is the $\pr^{r-1}-$bundle over $V$ whose fiber over $x \in V$ is the projectivization of $E_x$. 

\begin{definition}
Let $S$ be a non-zero positive $(p,p)-$current on $\pr(E)$. The \textit{horizontal dimension} (or $h-$dimension for short) of $S$ is the largest integer $j$ such that $S \wedge \omega_V^j \neq 0$. When this dimension is $0$ we say that $S$ is vertical.
\end{definition}

Given a cohomology class $\mathbf c \in H^{2p}(\pr(E),\C)$ we can apply the Leray-Hisrch Theorem and get a decomposition
\begin{equation*}
\mathbf c = \sum_{j = \max(0,\ell-p)}^{\min(\ell,\ell+r-1-p)} \pi^*(\kappa_j(\mathbf c)) \smallsmile h_{\pr(E)}^{j-\ell+p},
\end{equation*}
where $\kappa_j(\mathbf c)$ is a class in $H^{2 \ell - 2j} (V,\C)$ and $h_{\pr(E)} \in H^2(\pr(E),\C)$ is the tautological class of the associated $\mathcal O_{\pr(E)}(1)-$bundle over $\pr(E)$.

When $\pr(E) = \pr (N_V \oplus \C) =  \overline{N_V}$ is the compactification of the normal bundle to $V$ and $\mathbf c  = \kappa^V(T)$ is the total tangent class to $T$ along $V$ (see Definition \ref{def:tangent-class}) the above classes are denoted by $\kappa_j^V(T)$.

Recall that a cohomology class is pseudo-effective if it contains a positive closed current.

\begin{lemma}[\cite{dinh-sibony:density}, Lemma 3.8]
Let $S$ be a $(p,p)-$current on $\pr(E)$. If $s$ denotes the $h-$ dimension of $S$ then $\kappa_s(\{S\}) \in H^{2 \ell - 2s} (V,\C)$ is pseudo-effective.
\end{lemma}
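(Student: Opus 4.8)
The plan is to realize $\kappa_s(\{S\})$ as the cohomology class of an \emph{explicit} positive closed current on $V$, produced by wedging $S$ with a power of a K\"ahler form on $\pr(E)$ and pushing the result forward along $\pi$. The essential preliminary fact I will establish is that the $h$-dimension controls the Leray--Hirsch components from above: $\kappa_j(\{S\})=0$ for every $j>s$. Granting this, the correction terms that normally spoil such a pushforward disappear, and the resulting class is exactly $\kappa_s(\{S\})$. Throughout I write $h:=h_{\pr(E)}$ and use that $V$ is compact K\"ahler, so that $\pr(E)$ is K\"ahler as well.

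First I set up the currents. Since $\mathcal{O}_{\pr(E)}(1)$ is relatively ample, for $C\gg 0$ the class $h+C\,\pi^*\{\omega_V\}$ is a K\"ahler class on $\pr(E)$; I fix a K\"ahler form $\omega_E$ representing it, so that $\omega_E=\alpha+C\,\pi^*\omega_V$ with $\alpha$ a smooth closed real $(1,1)$-form representing $h$. For each index $j$ in the Leray--Hirsch range I set $b_j:=(r-1)-(j-\ell+p)\ge 0$ and define
$$ R_j:=\pi_*\big(S\wedge\omega_E^{\,b_j}\big). $$
As $S\ge 0$ is closed and $\omega_E^{\,b_j}\ge 0$ is closed, $R_j$ is a positive closed current on $V$ of bidegree $(\ell-j,\ell-j)$. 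A short degree count using $\pi_*h^{r-1}=1$ and $\pi_*h^{k}=0$ for $k<r-1$ shows that in cohomology $\{R_j\}=\kappa_j(\{S\})+\sum_{j'>j}(\cdots)$, where the omitted terms involve only the components $\kappa_{j'}(\{S\})$ with $j'>j$ (paired with Segre classes of $E$ and powers of $\{\omega_V\}$); the coefficient of $\kappa_j(\{S\})$ itself is $1$. I will not need the precise form of the corrections, only this triangular structure.

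The main step is the vanishing $\kappa_j(\{S\})=0$ for $j>s$. Fix such a $j$. By the projection formula,
$$ \int_V R_j\wedge\omega_V^{\,j}=\int_{\pr(E)}S\wedge\omega_E^{\,b_j}\wedge\pi^*\omega_V^{\,j}. $$
Expanding $\omega_E^{\,b_j}=(\alpha+C\,\pi^*\omega_V)^{b_j}$, every resulting term carries $\pi^*\omega_V$ to a power $\ge j>s$, hence a factor $S\wedge(\pi^*\omega_V)^{m}$ with $m>s$, which is zero by the very definition of the $h$-dimension. Thus $\int_V R_j\wedge\omega_V^{\,j}=0$, and since $R_j$ is positive and closed on the compact K\"ahler manifold $V$ this forces $R_j=0$, so $\{R_j\}=0$. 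Running a downward induction from the top index of the Leray--Hirsch range down to $s+1$ and feeding $\{R_j\}=0$ into the triangular relation above yields $\kappa_j(\{S\})=0$ for all $j>s$. With this in hand the corrections in $\{R_s\}$ all vanish and the same degree count gives $\{R_s\}=\kappa_s(\{S\})$; as $R_s$ is positive and closed, $\kappa_s(\{S\})$ is pseudo-effective. (If $s$ lies below the Leray--Hirsch range then $\kappa_s(\{S\})=0$ and there is nothing to prove.)

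I expect the vanishing statement $\kappa_j(\{S\})=0$ for $j>s$ to be the main obstacle: this is precisely where the current-theoretic definition of the $h$-dimension must be converted into cohomological information, and where the positivity of $S$ is indispensable, since it is what upgrades the vanishing of the \emph{integral} $\int_V R_j\wedge\omega_V^{\,j}$ to the vanishing of the \emph{current} $R_j$. Working with a genuinely K\"ahler (hence positive) form $\omega_E$ rather than a bare representative $\alpha$ of $h$ is essential for this positivity, while the extra $\pi^*\omega_V$ terms that $\omega_E$ introduces are harmless because fiber integration annihilates them away from the top power of $h$.
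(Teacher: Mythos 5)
Your argument is correct. Note, however, that the paper does not prove this lemma at all: it is quoted verbatim from Dinh--Sibony (Lemma 3.8 of \cite{dinh-sibony:density}), so there is no internal proof to compare against; what you have written is essentially a reconstruction of the argument in that reference, based on pushing $S$ wedged with powers of a K\"ahler representative of $h_{\pr(E)}+C\,\pi^*\{\omega_V\}$ down to $V$. I checked the two points where the argument could go wrong and both are sound: (i) the triangular structure of $\{R_j\}=\pi_*\bigl(\{S\}\smallsmile\{\omega_E\}^{b_j}\bigr)$ in terms of the $\kappa_{j'}$ follows from $\pi_*h^{m}=0$ for $m<r-1$, $\pi_*h^{r-1}=1$ and $\pi_*h^{r-1+i}=s_i(E)$, since the exponent $j'-\ell+p+k$ reaches $r-1$ only when $j'\geq j$, with coefficient $1$ at $j'=j$; and (ii) the vanishing $R_j=0$ for $j>s$ does require, as you emphasize, both the positivity of $S$ (so that the vanishing of the total mass $\int_V R_j\wedge\omega_V^{\,j}$ kills the current, not just its class) and the genuine positivity of $\omega_E$, the extra $\pi^*\omega_V$ factors being harmless because they only increase the power of $\pi^*\omega_V$ beyond $s$. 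The only cosmetic caveat is the edge case you already flag in parentheses; for a nonzero positive closed $(p,p)$-current the $h$-dimension in fact always lies in the Leray--Hirsch range, so that case does not occur, but your treatment of it is harmless either way.
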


\begin{lemma} \label{lemma:psef}
Let $X$ be a compact Kähler surface, $V \subset X$ be a smooth curve and $T$ be a $(1,1)-$ current without mass on $V$. If $S$ is a tangent current to $T$ along $V$ then the h-dimension of $S$ in $\overline{N_V}$ is zero. In particular $\kappa_0^V(T)$ is pseudo-effective.
\end{lemma}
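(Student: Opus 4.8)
The plan is to reduce the statement to the vanishing of the top horizontal part of $S$. Since $\ell = \dim V = 1$ and $S$ is a positive closed $(1,1)$-current on the surface $\overline{N_V} = \pr(N_V \oplus \C)$, its $h$-dimension can only be $0$ or $1$ (and we may assume $S \neq 0$, otherwise there is nothing to prove). By definition it equals $1$ precisely when the positive measure $S \wedge \pi^*\omega_V$ is nonzero, where $\pi \colon \overline{N_V} \to V$ is the projection. Thus it suffices to prove that $S \wedge \pi^*\omega_V = 0$. The final assertion then follows immediately from Lemma 3.8 of \cite{dinh-sibony:density} quoted above: the $h$-dimension is $s = 0$, so $\kappa_0(\{S\})$ is pseudo-effective, and $\kappa_0^V(T) = \kappa_0(\{S\})$ since $\{S\} = \kappa^V(T)$.

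To show $S \wedge \pi^*\omega_V = 0$ I would argue locally, using the local computation of tangent currents (Theorem \ref{thm:local-comp}). Fix a small $V_0 \subset V$ and a holomorphic chart $\tau$ identifying a neighbourhood of $V_0$ with $\D^\ell \times \D^{n-\ell} = \D \times \D$, with coordinates $(x',x'')$ so that $V_0 = \{x''=0\}$ and $\pi$ becomes the projection $(x',x'') \mapsto x'$. Set $T_0 = \tau_* T$, $A_\lambda(x',x'') = (x',\lambda x'')$, and let $\eta = \smo\,\diff x' \wedge \diff \overline{x'}$ be the local representative of $\pi^*\omega_V$. By Theorem \ref{thm:local-comp} the tangent current is a weak limit $S = \lim_n (A_{\lambda_n})_* T_0$ along some $\lambda_n \to \infty$, and since wedging with the fixed smooth form $\eta$ is weakly continuous we have $S \wedge \eta = \lim_n \big((A_{\lambda_n})_* T_0 \wedge \eta\big)$.

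The crucial point is that $\eta$ is invariant under the normal dilations, $A_\lambda^* \eta = \eta$, because $A_\lambda$ fixes the base coordinate $x'$. Hence the projection formula gives $(A_\lambda)_* T_0 \wedge \eta = (A_\lambda)_*(T_0 \wedge \eta)$, so for a test function $\varphi \geq 0$ supported in $\{|x'| \le R,\ |x''| \le R\}$ one gets $\langle (A_\lambda)_* T_0 \wedge \eta, \varphi\rangle \le \|\varphi\|_\infty\,(T_0 \wedge \eta)\big(A_\lambda^{-1}(\supp \varphi)\big) \le \|\varphi\|_\infty\,(T_0 \wedge \eta)\big(\{|x'| \le R,\ |x''| \le R/|\lambda|\}\big)$. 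As $|\lambda| \to \infty$ these tubes decrease to the compact piece $\{|x'| \le R,\ x''=0\}$ of $V_0$, so by continuity from above of the finite measure $T_0 \wedge \eta$ their mass tends to $(T_0 \wedge \eta)\big(V_0 \cap \{|x'| \le R\}\big)$. Because $\eta$ is dominated by a Kähler form, $T_0 \wedge \eta$ is dominated by the trace measure of $T_0$, which gives no mass to $V_0$ by hypothesis; hence this limit is $0$. Passing to the limit yields $\langle S \wedge \eta, \varphi\rangle = 0$ for every $\varphi \ge 0$, i.e.\ $S \wedge \pi^*\omega_V = 0$ on this chart. Covering $V$ by such charts, and recalling that $S$ is the trivial extension across infinity (so it carries no mass there), gives $S \wedge \pi^*\omega_V = 0$ on all of $\overline{N_V}$, so the $h$-dimension of $S$ is zero.

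The main obstacle I anticipate is controlling the mass as $\lambda \to \infty$: one must guarantee that the horizontal mass concentrates onto $V$ rather than escaping to infinity along the fibres. This is exactly what the dilation-invariance $A_\lambda^*\eta = \eta$ and the resulting projection-formula identity provide, after which the limiting mass is seen to be governed by the (vanishing) mass of $T$ on $V$. Everything else is formal once the local computation theorem has been invoked.
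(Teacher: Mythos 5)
Your argument is correct and follows essentially the same route as the paper's proof: both reduce to showing $S \wedge \pi^*\omega_V = 0$, exploit the invariance of $\smo\,\diff x' \wedge \diff\overline{x'}$ under the normal dilations $A_\lambda$ to rewrite $(A_\lambda)_*T_0 \wedge \eta$ as $(A_\lambda)_*(T_0 \wedge \eta)$, and conclude by letting the shrinking tubes $\{\|x''\| \le R/|\lambda|\}$ collapse onto $V$, where $T$ carries no mass. Your treatment of the limit via test functions and continuity from above is just a slightly more explicit version of the paper's appeal to semicontinuity, and your handling of the two edge cases (the case $S=0$ and the absence of mass at infinity in $\overline{N_V}$) is a welcome addition.
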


\begin{proof}
We have to show that $S \wedge \pi^*\omega_V = 0$, where $\omega_V$ is a Kähler form in $V$. We can work locally. Choose coordinates $(z,w)$ with values in the unit bi-disc in which $V$ is given by $\{w=0\}$  and $N_V \to V$ identifies with the trivial bundle. We can take $\omega_V =  i \diff z \wedge \diff \overline z$.

In these coordinates $S = \lim_{\lambda_n \to \infty} T_{\lambda_n}$, where $T_\lambda = (A_\lambda)_* T$ and $A_\lambda(z,w) = (z,\lambda w)$. Let $0<\rho <1$. Since  $i \diff z \wedge \diff \overline z$ is invariant under $A_\lambda$, the mass of $T_\lambda \wedge  i \diff z \wedge \diff \overline z$ over $\D(\rho)$ is given by $$ \int_{\D(\rho)}  T_\lambda \wedge  i \diff z \wedge \diff \overline z = \int_{\D(\rho)} (A_\lambda)_*  (T \wedge  i \diff z \wedge \diff \overline z) = \int_{\D(\rho, \lambda^{-1}\rho)} T \wedge  i \diff z \wedge \diff \overline z \leq \|T\|_{\D(\rho, \lambda^{-1}\rho)}.$$

Making $\lambda \to \infty$ and using the fact that $T$ has no mass on $V$ gives $\lim_{|\lambda| \to \infty}T_\lambda \wedge i \diff z \wedge \diff \overline z = 0$. By semicontinuity  $S \wedge i \diff z \wedge \diff \overline z = 0$, proving the first part of the Lemma. The last statement follows from Lemma \ref{lemma:psef}.
\end{proof}

\begin{corollary} \label{cor:nef}
Let $X$, $T$ and $V$ be as above. Then $\{T\} \smallsmile \{V\} \geq 0$.
\end{corollary}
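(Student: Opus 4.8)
The plan is to express the intersection number $\{T\} \smallsmile \{V\}$ in terms of the tangent class $\kappa^V(T)$ and then invoke the pseudo-effectivity already established in Lemma \ref{lemma:psef}. Since $X$ is a surface and $V$ a curve we have $\dim T + \dim V = n$, so $\{T\}\smallsmile\{V\} \in H^{4}(X,\C)\cong\C$ is a single number, and the task is only to show it is non-negative. First I would record the shape of the total tangent class in this low-dimensional situation. Here $\ell = \dim V = 1$, $p=1$, and $N_V$ has rank one, so $\overline{N_V} = \pr(N_V\oplus\C)$ is a $\pr^1$-bundle over $V$ and the Leray--Hirsch decomposition recalled above collapses to its single term $j=0$: one obtains $\kappa^V(T) = \pi^*(\kappa_0^V(T))$ with $\kappa_0^V(T)\in H^2(V,\C)$ and no $h_{\overline{N_V}}$-component. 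This is exactly the cohomological shadow of the verticality statement in Lemma \ref{lemma:psef}.

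Next I would use the compatibility of the tangent/density class with the cohomological cup product from the theory of Dinh and Sibony (the quantitative counterpart of Proposition \ref{prop:cup-product}, developed in Section 5 of \cite{dinh-sibony:density}) in order to localize the product to $\overline{N_V}$. Concretely, the number $\{T\}\smallsmile\{V\}$ is the intersection of the tangent class with the zero section, $\{T\}\smallsmile\{V\} = \int_{\overline{N_V}} \kappa^V(T) \smallsmile [V_0]$, and since $\kappa^V(T) = \pi^*(\kappa_0^V(T))$ and $\pi_* h_{\overline{N_V}} = 1$, the projection formula reduces this to $\deg_V \kappa_0^V(T)$. The transversal model $T=[W]$, in which the tangent current is a sum of fibers of $\overline{N_V}$ and both sides count the intersection points of $W$ and $V$, serves as a check on the normalization.

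Finally, because $V$ is a curve, a pseudo-effective class in $H^2(V,\C)\cong\C$ is represented by a positive measure and therefore has non-negative degree. Lemma \ref{lemma:psef} gives that $\kappa_0^V(T)$ is pseudo-effective, so $\deg_V\kappa_0^V(T)\geq 0$, and the identity of the previous paragraph yields $\{T\}\smallsmile\{V\}\geq 0$.

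The step I expect to be delicate is the identity $\{T\}\smallsmile\{V\} = \deg_V\kappa_0^V(T)$: one must apply the Dinh--Sibony intersection machinery with care to be sure that the global cup product is faithfully captured by the local tangent-class computation on $\overline{N_V}$, and in particular that no contribution coming from a positive $h_{\overline{N_V}}$-component is being discarded. It is precisely the vanishing of that component --- that is, the verticality in Lemma \ref{lemma:psef} --- that makes the reduction to the single pseudo-effective class $\kappa_0^V(T)$ legitimate.
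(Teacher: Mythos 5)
Your argument is correct and essentially identical to the paper's: both identify $\{T\}\smallsmile\{V\}$ with the canonical image of $\kappa_0^V(T)$ in $H^4(X,\C)\simeq\C$ via the results of Section 5 of Dinh--Sibony, and then conclude non-negativity from the pseudo-effectivity of $\kappa_0^V(T)$ supplied by Lemma \ref{lemma:psef}. The extra detail you give on the Leray--Hirsch decomposition and the discarding of the $h_{\overline{N_V}}$-component is just an unpacking of the verticality (h-dimension zero) already contained in that lemma, so no genuinely different route is taken.
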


\begin{proof}
It follows from the results in Section 5 of \cite{dinh-sibony:density} that the class $\{T\} \smallsmile \{V\} \in H^4(X,\C)$ is the canonical image of $\kappa^V_0(T) \in H^2(V,\C)$ in $H^4(X,\C)$. So by the above lemma $\{T\} \smallsmile \{V\}$ is a pseudo-effective class in $H^4(X,\C) \simeq \C$, hence it is given by a positive real number.
\end{proof}

Recall the the Hirzebruch surface $\Sigma_n$, $n \geq 0$ is defined by $$\Sigma_n = \pr(\mathcal O_{\pr^1} \oplus \mathcal O_{\pr^1}(-n) ),$$ where $\mathcal O_{\pr^1}$ denotes the trivial line bundle over $\pr^1$ and $ \mathcal O_{\pr^1}(-n) $ is the $n^{th}$ exterior power of the tautological line bundle on $\pr^1$.

Every compact complex surface that is a $\pr^1-$bundle over $\pr^1$ is isomorphic to $\Sigma_n$ for some $n \geq 0$. The surface $\Sigma_0$ is equal to $\pr^1 \times \pr^1$ and $\Sigma_1$ is isomorphic to the the projective plane blown-up at a point.

\begin{proposition}
Let $\mathcal L$ be a Lipschitz lamination by Riemann surfaces embedded in $\Sigma_n$, $n \neq 0$. Then every diffuse foliated cycle $T$  directed by $\mathcal L$ is given by $T = \pi^* \nu$ for some diffuse measure on $\pr^1$. In particular, the lamination $\mathcal L$ is a union of fibers of $\pi: \Sigma_n \to \pr^1$.
\end{proposition}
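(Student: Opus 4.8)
The plan is to reduce the statement to a computation in cohomology and then upgrade the resulting identity to a pointwise one by a local positivity argument. Since $\dim \Sigma_n = 2$ and the lamination has dimension $q=1$, we have $q = \dim\Sigma_n/2$, so Theorem~\ref{thm:main-thm} applies and gives $\{T\} \smallsmile \{T\} = 0$. I would then use the intersection theory of $\Sigma_n$: denoting by $F$ a fibre of $\pi$ and by $C_0$ the section of self-intersection $-n$, the space $H^{1,1}(\Sigma_n,\R)$ is spanned by $\{F\}$ and $\{C_0\}$, with
\begin{equation*}
\{F\}\smallsmile\{F\} = 0,\qquad \{F\}\smallsmile\{C_0\}=1,\qquad \{C_0\}\smallsmile\{C_0\}=-n.
\end{equation*}
Writing $\{T\} = a\{F\} + b\{C_0\}$ with $a,b\in\R$, the vanishing of the self-intersection becomes $b(2a-nb)=0$.

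The key step is to fix the signs of $a$ and $b$ using positivity. Because $T$ is diffuse it carries no mass on any compact curve (Remark~\ref{rmk:atoms-leaves}), and in particular none on the smooth rational curves $F$ and $C_0$. Corollary~\ref{cor:nef} then yields $\{T\}\smallsmile\{F\} = b \ge 0$ and $\{T\}\smallsmile\{C_0\} = a - nb \ge 0$. If $b\neq 0$, the first inequality gives $b>0$ and the relation forces $2a = nb$, so that $a - nb = -nb/2 < 0$ since $n\ge 1$; this contradicts the second inequality. Hence $b=0$, and the first inequality gives $a\ge 0$. This is precisely where the hypothesis $n\neq 0$ enters: it makes $C_0$ a curve of strictly negative self-intersection, which forces the class of $T$ onto the fibre ray. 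We conclude that $\{T\} = a\{F\}$ with $a \ge 0$, and in particular $\{T\}\smallsmile\{F\} = 0$.

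Finally I would turn this cohomological identity into the structural statement $T = \pi^*\nu$. Fix a K\"ahler form $\omega$ on $\pr^1$; then $\pi^*\omega$ is a smooth semi-positive closed form whose class is a positive multiple of $\{F\}$, so the positive measure $T\wedge\pi^*\omega$ has total mass $\{T\}\smallsmile\{\pi^*\omega\} = 0$ and therefore vanishes. In local coordinates $(z,w)$ adapted to the submersion, so that $\pi(z,w)=z$, this reads $T\wedge i\,\diff z\wedge\diff\overline{z} = 0$. Exactly as in the proof for complex tori above, the positivity of $T$ then forces $T\wedge \diff z = 0$, so that $T$ is directed by the fibres of $\pi$; combining this with $\diff T = 0$ shows that the local coefficient of $T$ does not depend on the fibre variable $w$, whence $T = \pi^*\nu$ for a positive measure $\nu$ on $\pr^1$. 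The measure $\nu$ must be diffuse, for an atom of $\nu$ would place positive mass of $T$ on a single fibre, contradicting the diffuseness of $T$. Since $T=\pi^*\nu$ is also directed by $\mathcal L$, the leaves of $\mathcal L$ meeting $\supp T$ coincide with fibres of $\pi$, which gives the final assertion. I expect the main obstacle to be this last paragraph: the passage from the vanishing of a cohomology class to the pointwise identity $T=\pi^*\nu$ rests on the local positivity computation rather than on cohomology alone.
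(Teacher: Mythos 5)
Your proof is correct and follows the same overall strategy as the paper: write $\{T\}=a\{F\}+b\{C_0\}$, use Theorem \ref{thm:main-thm} to get $b(2a-nb)=0$, use Corollary \ref{cor:nef} (applicable because diffuseness of $T$ rules out mass on the curves $F$ and $C_0$) to force $b=0$, and then deduce $T\wedge\pi^*\omega=0$ from $\{T\}\smallsmile\{F\}=aF^2=0$ together with the fact that a positive closed current with vanishing class is zero. The only point where you genuinely diverge is the last step: the paper concludes from $T\wedge\pi^*\omega_{FS}=0$ that $T$ has $h$-dimension zero and invokes the structure result for vertical positive closed currents on projective bundles (Lemma 3.3 of \cite{dinh-sibony:density}) to get $T=\pi^*\nu$, whereas you rederive this by hand: in adapted coordinates, $T\wedge i\,\diff z\wedge\diff\overline z=0$ kills the coefficient of $i\,\diff w\wedge\diff\overline w$, positivity (Cauchy--Schwarz for the matrix of measure coefficients) then kills the mixed terms, and closedness makes the remaining coefficient independent of the fibre variable. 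This is a correct and more self-contained alternative; the cited lemma buys brevity and generality in higher rank, while your computation makes the mechanism transparent in this two-dimensional case. You also spell out two points the paper leaves implicit, namely that $\nu$ must be diffuse and how the final assertion about the leaves follows; both are welcome additions.
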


\begin{proof}
The group $H^2(\Sigma_n, \C)$ is generated by $F$ and $C$ , where $F$ is the class of a fiber and $C$ is the class of the section at infinity. Their intersections are given by $$ F^2 = 0, \,\,\,\, F \smallsmile C = 1 \,\,\, \text{ and } \, \, C^2 = -n. $$
If $\nu$ is any probabilty measure on $\pr^1$ then the class of the current $\pi^* \nu$ is equal to $F$. In particular, if $\omega_{FS}$ denotes the Fubini-Study form on $\pr^1$ then $\{\pi^* \omega_{FS}\} = F$.

Let $T$ be a diffuse foliated cycle directed by $\mathcal L$ and denote by $\mathbf c$ its class on $H^2(\Sigma_n, \C)$.  Write $\mathbf c = a F + b C$ for some $a,b \in \R$. Notice that, from Corollary \ref{cor:nef}, we have  $b = C \smallsmile F \geq 0$.  From Theorem \ref{thm:main-thm} we have $$ 0 = \mathbf c^2 = 2ab - b^2\,n = b\,(2a - bn).$$

Assume that $b \neq 0$, then $2a = bn$ and $$\mathbf c \smallsmile C = \left(\frac{bn}{2}F + bC \right) \smallsmile C = \frac{bn}{2} - bn = - \frac{bn}{2} < 0,$$ which contradicts Corollary \ref{cor:nef}. Therefore we must have $b = 0$ and $\mathbf c = a F$.

We claim that $T \wedge \pi^* \omega_{FS} = 0$. Indeed, as noted above the class of $\pi^* \omega_{FS}$ equals to $F$, so the class the positive current $T \wedge \pi^* \omega_{FS}$ is $\mathbf c \smallsmile F = a \, F^2 =0$. The claim follows from the fact that the class of a positive closed current $S$ is zero if and only if $S=0$.

The fact that $T \wedge \pi^* \omega_{FS} = 0$ means that the current $T$ is vertical, i.e. its $h-$dimension is zero. It follows that $T$ is given by an average of currents of integration along the fibers of $\pi$ (see \cite{dinh-sibony:density}, Lemma 3.3 ), that is, $T = \pi^* \nu$ for some measure on $\pr^1$.
\end{proof}

The above result shows in particular that a Lipschitz lamination on $\Sigma_n$ without compact leaves carries no directed  positive closed current. It is worth mentioning that every non-singular holomorphic foliation globally defined on $\Sigma_n$ is given by the canonical fibration (see \cite{brunella:birational}, p. $37$).

For $n=0$ we have that $\Sigma_0 = \pr^1 \times \pr^1$. Arguing as above we can show that the only diffuse foliated cycles are given by $T = \pi_1^* \nu$ or $T =\pi^*_2 \nu$ for some diffuse positive measure $\nu$ on $\pr^1$, where $\pi_i$ denote the projection on each factor. A similar result was obtained by Perez-Garrandes in  \cite{perez-garrandes}, where he also showed that for a Lipschitz lamination in $\pr^1~\times~\pr^1$ the absence of foliated cycles implies the existence of  a unique directed harmonic current of mass one.

\bibliography{refs}
\bibliographystyle{plain}
\end{document}